\documentclass[reqno,11pt]{amsart}
\usepackage{amsmath, amsfonts, amssymb}
\usepackage{tikz}
\usetikzlibrary{calc,decorations.markings}
\usepackage{graphicx}

\usepackage{amsmath, amsfonts, amssymb, dsfont, dsfont, tikz}
\usetikzlibrary{calc,decorations.markings, arrows}
\usepackage{graphicx}
\usepackage[symbol]{footmisc}

\title[Variance of $\mathcal{B}$--free integers in short intervals]{Variance of $\mathcal{B}$--free integers in short intervals}

\author[M. Avdeeva]{Maria Avdeeva}
\address[M. Avdeeva]{Mathematics Department, Princeton University, Princeton, NJ 08544, USA}
\email{mavdeeva@math.princeton.edu}

\def \B {\mathcal{B}}
\def\P {\mathbb{P}}
\def\Q {\mathbb{Q}}
\def\E {\mathbb{E}}

\def\R {\mathbb{R}}
\def\Z{\mathbb{Z}}
\def\N {\mathbb{N}}

\def\Var {\operatorname{Var}}
\def \ok {\mathcal{O}_K}
\def \a {\mathfrak{a}}
\def \p {\mathfrak{p}}

\def\mod{\operatorname{mod}}

\begin{document}
\newtheorem{theorem}{Theorem}
\newtheorem*{corollary}{Corollary}
\newtheorem{lemma}{Lemma}
\newtheorem*{proposition}{Proposition}
\theoremstyle{definition}
\newtheorem{definition}[theorem]{Definition}
\newtheorem*{remark}{Remark}
\numberwithin{equation}{section}
\begin{abstract}
We prove some new statements on the distribution of $\B$--free numbers in short intervals. In particular, we show an asymptotic result for the variance of the number of $\B$--free integers in random short intervals which are, in some sense, uniformly distributed. 

We establish a connection between our work and the paper by El Abdalaoui, Lema\'nczyk \& de la Rue on a flow associated to $\B$--free integers. In addition, we study an analog of our variance for $k$--free integers in a number field which provides new information for the corresponding dynamical system constructed  by Cellarosi \& Vinogradov.

\end{abstract}

\maketitle

\section{Introduction}
Let $\mathcal{B} = \{b_1, b_2, b_3,..\}$ be a sequence of integers greater than $1$ satisfying the following assumptions:

\begin{enumerate}
\item[(A1)] $b_i$ and $b_j$ are relatively prime for $i \neq j$;
\item[(A2)] $\sum_i 1/{b_i} < \infty.$
 \end{enumerate}
Throughout the text, we will always assume (A1) and (A2), sometimes without explicitly stating it.

We call a positive integer \textbf{\textit{$\B$--free}} if it is not divisible by any element of $\mathcal{B}$. 

 $\B$--free numbers were introduced by Erd\"os in \cite{E} as a generalization of square--free numbers. Square--free numbers correspond to $\B=\{p_1^2, p_2^2, ...\}$ where $\{p_1, p_2, ...\}$ is the set of all primes. They are essential due to their connection with the M\"obius function (some background on this subject can be found in Section \ref{dyn}). Erd\"os conjectured that for any constant $\theta>0$, for all sufficiently large $x$, the interval $\left[x, x+x^{\theta}\right)$ must contain a $\B$--free number, and proved existence of a $\theta<1$ which is independent of $\B$ and has this property. Since then, $\B$--free integers in short intervals and, consequently, the gaps between them have been extensively studied (e.g., see \cite{Kow, Mat, P, Wu}). 
 
 In this paper, we analyze the average behavior of $\B$--free integers in the intervals that are the ``shortest possible,'' namely, in the family of intervals $\{[x, x+N),\, 1\leq x \leq X\}$, as $X \to\infty$, for $N$ independent of $x$. Let us clarify what this means.
 
Denote by $\mu^{\B}$ the indicator of the set of $\B$--free integers:
\begin{equation*}
\mu^{\mathcal{B}} (n) = 
\begin{cases}
1, &\text{if } n\text{ is }\B\text{--free}; \\
 0, &\text{otherwise}.\\
\end{cases}
\end{equation*}

For $\B=\{p_1^k, p_2^k, ....\}$, $\B$--free numbers are called \textit{$k$--free}. In \cite{Mir}, Mirsky studied the distribution of patterns of $k$--free integers and computed their correlation functions, i.e., the limits of correlations of their indicator. 

One can show that, under the assumptions (A1) and (A2), all the frequencies of patterns of $\mu^{\mathcal{B}}$ also exist. More precisely, for any $r\geq 0$, for any finite set $0 \leq h_1 \leq h_2 \leq ... \leq h_r$ (all integers), the limits
\begin{equation*}
c^{\B}_{r+1} (h_1, h_2\ldots h_r) := \lim_{X\to\infty} \frac 1 X  \sum_{1 \leq n \leq X} \mu^{\B} (n) \, \mu^{\B} (n+h_1) \ldots \mu^{\B} (n+h_r)
\end{equation*}
exist. We call them \textit{$(r+1)$--st correlation functions} of ${\B}$--free integers.

The formulae for these correlation functions are given in Lemma \ref{BM}, Section \ref{dyn}. Those readers who are interested in a dynamical proof can see \cite{L}, Theorem 4.1.

Lemma \ref{BM} allows the sequence $\{h_i\}$ to be empty. In this case, it gives the density $\rho^{\B}$ of the $\B$--free numbers and we will see that
\begin{equation}\label{brho}
\rho^{\B} = \prod_{b\in\B} \left(1-\frac 1 {b}\right).
\end{equation}

For example, for square--free integers, we regain their well--known density $$\rho^{\,\{p^2\}} = \prod\limits_p \left(1-\frac 1 {p^2}\right) = \frac 1 {\zeta(2)} = \frac 6 {\pi^2}.$$ 




Denote the uniform distribution on $\{1\ldots X\}$ by $\P_X$ and fix a positive integer $N$. The equation (\ref{brho}) implies that the expected number of $\B$--free integers in a random interval $[x, x+N)$ when $x$ is distributed according to $\P_X$ asymptotically equals $\rho^{\B} N$, as $X \to \infty$. Moreover, the existence of the second correlation functions guarantees that the variance of this random variable exists, i.e., one can define the central object of this paper which we call the \textbf{\textit{variance of $\B$--free numbers in short intervals}}:
\begin{align} \label{var}
\Var^{\B} (N) :&= \lim_{X\to\infty} \frac 1 X \sum_{1\leq x \leq X} \left(\sum_{x\leq n < x+N} \mu^{\B} (n) - \rho^{\B} N\right)^2 \notag \\&= \lim_{X\to\infty} \E_{\,\P_X} \left(\sum_{x\leq n < x+N} \mu^{\B} (n) - \rho^{\B} N\right)^2.
\end{align}

Let us fix the notation. In this text, we will use the following standard notation for two arithmetic functions, $f(n)$ and $g(n)$:
\begin{itemize}
\item $f(n) = O(g(n))$, or, equivalently, $f(n)\lesssim g(n)$, if there exists a positive constant $C$ and some $n_0$ such that $f(n)\leq C \,g(n)$ for all $n\geq n_0$;
\item $f(n) =o(g(n))$ if $f(n)/g(n) \to 0$, as $n\to\infty$;
\item $f(n) \sim g(n)$ if $f(n)/g(n) \to 1$, as $n\to\infty$;
\item $f(n) = \omega(g(n))$ if $f(n)/g(n) \to \infty$, as $n \to \infty$.
\end{itemize} 
\vspace{0.5cm}

We prove the following asymptotic statement for the variance.

\begin{theorem}\label{bvar}
Let $\mathcal{Z}^{\B}$ be the set of \textbf{$\B$--integers}, i.e., the multiplicative semigroup generated by $\B$.

Suppose that $2 \not \in \B$ and the counting function 
\begin{equation}\label{count}
\mathcal{N}^{\B} (N) := \#\{n \in \mathcal{Z}^{\B}, 1\leq n \leq N\}
\end{equation}
satisfies $\mathcal{N}^{\B} (N) = A \,N^{\alpha} + O\left(N^{\beta}\right)$, as $N \to \infty$, for some constants $A>0$, $0 \leq \beta <\alpha <1$. Then
\begin{equation*}
\Var^{\B} (N) \sim C \, N^{\alpha} \tag*{($N\to\infty$)}
\end{equation*}
where $C$ is independent of $N$ and equals $$\frac{2^{\alpha} {\pi}^{\alpha-2} A} {\alpha-1} \prod_{b\in \B} \left(1-\frac {2} b + \frac 2 {b^{\alpha+1}} -\frac 1 {b^{2\alpha}}\right) \sin\left(\frac {\pi(\alpha-1)} 2\right) \Gamma(2-\alpha) \zeta(2-\alpha).$$
\end{theorem}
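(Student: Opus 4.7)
My plan is to express the variance via the second correlation function, reduce it to an explicit arithmetic sum indexed by the squarefree $\B$-integers, and then analyse that sum by Mellin--Perron summation combined with the functional equation of $\zeta$.

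Expanding the square in (\ref{var}) and using the existence of second correlations (Lemma \ref{BM}) gives
$$\Var^{\B}(N) = \sum_{|h|<N}(N-|h|)\bigl[c_2^{\B}(|h|)-(\rho^{\B})^2\bigr].$$
Coprimality of the $b$'s and the Chinese remainder theorem yield the product $c_2^{\B}(h)=\prod_{b\mid h}(1-1/b)\prod_{b\nmid h}(1-2/b)$, which I would expand over divisors as
$$c_2^{\B}(h) \;=\; C\!\!\sum_{\substack{d\in\B^{*}\\ d\mid h}}\!\!G(d),\quad C:=\prod_{b\in\B}\!\Bigl(1-\tfrac{2}{b}\Bigr),\ \ G(d):=\!\prod_{\substack{b\in\B\\ b\mid d}}\!\tfrac{1}{b-2},$$
where $\B^{*}$ denotes the set of products of distinct elements of $\B$. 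Combining the algebraic identity $(\rho^{\B})^2 = C\sum_{d\in\B^{*}}G(d)/d$ with the elementary evaluation
$$\sum_{|h|<N,\,d\mid h}(N-|h|)\;=\;\frac{N^2}{d}+d\,f(N/d),\qquad f(t):=\{t\}(1-\{t\}),$$
and switching the order of summation, one obtains the exact closed form
$$\Var^{\B}(N) \;=\; C\sum_{d\in\B^{*}} d\,G(d)\,f(N/d).$$

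For the asymptotic analysis, the crucial observation is the Euler product
$$C\sum_{d\in\B^{*}}\frac{d\,G(d)}{d^s}\;=\;\prod_{b\in\B}\!\Bigl(1-\tfrac{2}{b}+b^{-s}\Bigr).$$
Since, by the hypothesis $\mathcal{N}^{\B}(N)=AN^\alpha+O(N^\beta)$ with $\beta<\alpha$, the zeta-like series $\zeta^{\B}(s):=\prod_{b}(1-b^{-s})^{-1}$ has a simple pole at $s=\alpha$ of residue $A\alpha$ and meromorphically continues slightly to the left of this line, the product above inherits a simple pole at $s=\alpha$, with residue expressible in terms of $A$ and an explicit infinite product over $\B$. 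Next, I would substitute the Fourier expansion $f(t)=\tfrac16-\pi^{-2}\sum_{k\ge1}k^{-2}\cos(2\pi kt)$ and analyse the constant and oscillatory pieces by Mellin--Perron summation: shifting the contour past $\sigma=\alpha$ yields the main $N^{\alpha}$-term; the summation over $k\ge1$ introduces the factor $\zeta(2-\alpha)$; and the functional equation
$$\zeta(\alpha-1)\;=\;2^{\alpha-1}\pi^{\alpha-2}\sin\!\tfrac{\pi(\alpha-1)}{2}\,\Gamma(2-\alpha)\,\zeta(2-\alpha)$$
reassembles the remaining $\sin$-, $\Gamma$- and $2^{\alpha}\pi^{\alpha-2}$-factors into the constant stated in the theorem.

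The main obstacle is the oscillatory part: although $\sum_{d\in\B^{*}}dG(d)$ itself diverges, the cancellation in the Fourier-decomposed double sum $\sum_{k}k^{-2}\sum_{d}dG(d)\cos(2\pi kN/d)$ must be genuinely exploited and not merely bounded. Justifying the contour shifts past $\sigma=\alpha$ and showing that the secondary contributions are $o(N^\alpha)$ uses precisely the analytic continuation afforded by the error hypothesis $\beta<\alpha$.
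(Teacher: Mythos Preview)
Your derivation of the exact identity
\[
\Var^{\B}(N)\;=\;C\sum_{d\in\B^{*}} d\,G(d)\,f(N/d),\qquad f(t)=\{t\}(1-\{t\}),
\]
is correct and is a pleasant, more elementary route to the point the paper reaches only after applying Perron's formula and extracting the residues at $s=1$ and $s=0$. Indeed, using the classical evaluation
\[
\frac{1}{2\pi i}\int_{(v)}\zeta(s)\,\frac{x^{s+1}}{s(s+1)}\,ds\;=\;\tfrac12 f(x)\qquad(-1<v<0),
\]
your closed form is \emph{literally} the paper's contour integral $\frac{C^{\B}}{\pi i}\int_{(v)}\zeta(s)G^{\B}(s)N^{s+1}\frac{ds}{s(s+1)}$ written out termwise in the Dirichlet series $G^{\B}(s)=\sum_{d}G(d)d^{-s}$. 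So the two arguments coincide at this stage; you simply arrived there combinatorially rather than via Lemma~\ref{Perr}.

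The divergence you flag is not a side issue but the heart of the matter, and the Fourier-expansion plan as written does not resolve it. Splitting $f=\tfrac16-\pi^{-2}\sum_k k^{-2}\cos(2\pi k\,\cdot)$ produces two pieces that are \emph{individually} of size $\asymp\sum_{d\le N}dG(d)\asymp N^{\alpha}$, so neither is an error term: the main contribution $CN^{\alpha}$ arises from a genuine cancellation between them, and one cannot extract it by Mellin--Perron on each piece separately. The way the paper circumvents this is to stay in the contour-integral formulation and push the line of integration past $s=\alpha-1$; the functional equation $\zeta(s)=\xi(s)\zeta(1-s)$ enters not to ``reassemble'' a pre-existing $\zeta(\alpha-1)$ but to control the size of $\zeta(s)$ on vertical lines with $\sigma<0$, and the resulting double sum over $(n,m)$ (coming from $\zeta(1-s)=\sum_n n^{s-1}$ and $G^{\B}(s)=\sum_m g^{\B}(m)m^{-s}$) is precisely your $(k,d)$ sum. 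Bounding that double sum is the substantive work---the paper's Proposition and the estimates for $I_1,\dots,I_5$---and requires, beyond the pole of $\zeta^{\B}$ at $\alpha$, the analytic continuation of $\zeta^{\B}$ into $\Re s>\beta$ (Lemma~\ref{extension}) together with the logarithmic bound of Lemma~\ref{zero-free} in a zero-free-type region, plus a stationary-phase estimate for $\int(\tfrac{2\pi eNn}{mt})^{it}t^{-3/2-c}\,dt$. None of this is visible in your sketch; once you supply it, your argument and the paper's are the same proof in different clothing.
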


\begin{remark} With a little additional work along the lines of our proof, it is possible to obtain an $O_{\B}\left(N^{\alpha} e^{-\operatorname{const} \sqrt{N}}\right)$ error term for the variance.\footnote[2]{As usually, here the subscript in the $O$--notation emphasizes dependence on $\B$.}


\end{remark}

Now, for each $X$ and $N$, consider the probability of the so-called \textit{exceptional set}, i.e., the set of the intervals containing no $\B$--free numbers: $$P_{\B}(X, N) := \P_X \{x\,|\,1\leq x \leq X, \sum_{x\leq n < x+N} \mu^{\B} (n) = 0\}.$$

Combining existence of asymptotic frequencies for $\B$--free integers (Lemma \ref{BM}) and Theorem \ref{bvar} with Markov's inequality, we immediately deduce
\begin{corollary}\label{cor}
Under the assumptions of Theorem \ref{bvar}, for any integer $N\geq 1$, $\lim_{X\to\infty} P_{\B}(X,N)$ exists and, as $N \to \infty$,
\begin{equation*}
\lim_{X\to\infty} P_{\B}(X,N) \lesssim N^{\alpha-2}.
\end{equation*}
\end{corollary}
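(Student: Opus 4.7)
The plan is to obtain both conclusions by separately invoking Lemma \ref{BM} (for existence) and Theorem \ref{bvar} via a Chebyshev--type inequality (for the upper bound); no new ideas are needed beyond what is already on the table.

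For existence of $\lim_{X\to\infty}P_{\B}(X,N)$, I would expand the indicator of an empty interval by inclusion--exclusion. Since $\mu^{\B}$ takes values in $\{0,1\}$, the sum $\sum_{x\leq n < x+N}\mu^{\B}(n)$ vanishes if and only if $\prod_{n=0}^{N-1}(1-\mu^{\B}(x+n))=1$, so
$$\mathbf{1}\Bigl\{\sum_{x\leq n<x+N}\mu^{\B}(n)=0\Bigr\} = \sum_{S\subseteq\{0,\dots,N-1\}}(-1)^{|S|}\prod_{i\in S}\mu^{\B}(x+i).$$
Averaging over $1\leq x\leq X$ produces a finite ($2^N$--term) linear combination of sums of the form appearing in the definition of the correlation functions $c^{\B}_{|S|}$. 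Each converges as $X\to\infty$ by Lemma \ref{BM}, hence so does $P_{\B}(X,N)$, and in fact $\lim_X P_{\B}(X,N) = \sum_S(-1)^{|S|}c^{\B}_{|S|}(S)$.

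For the asymptotic upper bound, I would apply Chebyshev's inequality (Markov's inequality to the squared deviation). Since the event $\{\sum_{x\leq n<x+N}\mu^{\B}(n)=0\}$ forces the deviation from the mean to be at least $\rho^{\B} N$,
$$P_{\B}(X,N) \leq \P_X\Bigl(\bigl|\sum_{x\leq n<x+N}\mu^{\B}(n)-\rho^{\B}N\bigr|\geq \rho^{\B}N\Bigr) \leq \frac{1}{(\rho^{\B}N)^2}\, \E_{\P_X}\Bigl(\sum_{x\leq n<x+N}\mu^{\B}(n)-\rho^{\B}N\Bigr)^2.$$
Passing to $X\to\infty$ and invoking Theorem \ref{bvar} yields
$$\lim_{X\to\infty}P_{\B}(X,N) \leq \frac{\Var^{\B}(N)}{(\rho^{\B}N)^2} \sim \frac{C}{(\rho^{\B})^2}\,N^{\alpha-2},$$
which is the claimed $O(N^{\alpha-2})$ estimate.

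There is no real obstacle here: both steps are direct applications of standard tools to ingredients already produced. The only bookkeeping point is exchanging $\lim_{X\to\infty}$ with the inclusion--exclusion sum, which is trivial since for each fixed $N$ the sum has only $2^N$ terms; and noting that $\rho^{\B}>0$ under (A2), so the Chebyshev denominator is nondegenerate.
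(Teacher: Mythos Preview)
Your proposal is correct and follows the same approach the paper indicates: existence via Lemma~\ref{BM} (you make the inclusion--exclusion explicit, which is exactly how one unpacks ``existence of asymptotic frequencies'' here), and the bound via Markov's inequality applied to the squared deviation together with Theorem~\ref{bvar}. The bookkeeping remarks (finiteness of the inclusion--exclusion sum, $\rho^{\B}>0$) are appropriate and complete.
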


This corollary can be compared with the results of Matom\"aki \cite{Mat}.
In \cite{Mat}, $N$ was allowed to grow with $X$ and, provided that $N \leq X^{\frac 1 6 -\delta}$ for some $\delta>0$, Matom\"aki showed that $P_{\B} (X,N) \lesssim N^{-\theta}$ for all $X$ and any $\theta<1$. Therefore, although our assumptions on $N$ are more restrictive, we prove a stronger statement in the case considered.

Theorem \ref{bvar} will be proven in Section \ref{th1proof} using the methods of complex analysis and some technical lemmata that are contained in Section \ref{tech}. Prior to that, in Section \ref{dyn}, we will briefly discuss the flows associated to square--free numbers that were considered by Sarnak \cite{Sar} and Cellarosi and Sinai \cite{CS} and the generalization of these flows to $\B$--free integers (introduced by El Abdalaoui, Lema\'nczyk and de la Rue \cite{L}). Our variance can be defined entirely in number--theoretical terms and, as we will see, the dynamical approach is unnecessary for the proofs of our results. Nevertheless, we prefer the dynamical perspective and view the variance as a characteristic of the corresponding dynamical system.
\\

Omitting a finite number of the elements of the sequence $\B$ affects only the constant $C$.

One should mention that the case of $k$--free integers was resolved previously. Namely, for $\B = \{p_1^k, p_2^k, \ldots\}$, the function $\mathcal{N}^{\B} (N)$ defined in (\ref{count}) simply counts the number of integer $k$--th powers up to $N$, thus, $\alpha=\frac 1 k$ and $\beta = 0$. Therefore, by Theorem \ref{bvar}, the variance of $k$--free numbers in short intervals of length $N$ behaves asymptotically as $C(k)\,N^{1/k}$. This was obtained with no error term by different methods in \cite{DT}. The even more special case of square--free integers, $k=2$, was covered by Hall (see \cite{H}, Lemma 1) whose approach we use in the proof of Theorem \ref{bvar}. In a later work \cite{H2}, Hall gave a bound on the higher moments of square--free integers in short intervals. The author obtained similar bounds for the higher moments of $\B$--free integers in short intervals, however, believes that improving Hall's results is a very hard problem.
\\

A natural step to further generalization is to consider $k$--free integers in a general number field. Let $K/\Q$ be a degree $d$ extension with a ring of integers $\ok$. One way to define a $k$--free integer in this setting is to declare an element of $\ok$ $k$--free if its principal ideal is a product of distinct prime ideals. This was done by Cellarosi and Vinogradov in \cite{CV}. In Section \ref{kfree}, we discuss $k$--free integers in number fields, a corresponding flow from \cite{CV} and define for them an analog of the variance in short intervals which we denote by $\Var_{\ok}^{(k)}$. We prove the following bound on this variance.
\begin{theorem}\label{okvar}
Let the variance $\Var_{\ok}^{(k)} (N)$ of $k$--free integers in the ring of integers $\ok$ of the number field $K$ be defined by (\ref{varok}). Then
\begin{equation}\label{okbound}
\Var_{\ok}^{(k)} (N) = O_{K, k} \left(N^{2d-1}\right),
\end{equation}
as $N\to\infty$, where $d$ is the degree of $K$ over $\Q$.
\end{theorem}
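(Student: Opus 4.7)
The plan is to adapt the Möbius-sieve approach of Theorem \ref{bvar} to the lattice $\ok$, embedded in $\R^d$ via the standard Minkowski map. Writing $\mu_K$ for the Möbius function on integral ideals of $\ok$ and $\mu^{(k)}_K$ for the indicator of $k$--free elements, one has the identity
\[
\mu^{(k)}_K(\alpha) \;=\; \sum_{\a^k \supseteq (\alpha)} \mu_K(\a).
\]
Denote by $\Omega_x$ the shifted box of side $N$ used in (\ref{varok}) and by $n_\a(x)$ the number of points of the sublattice $\a^k\ok$ inside $\Omega_x$. Summing the identity over $\alpha\in\Omega_x\cap\ok$ and swapping the two sums, the count of $k$--free elements becomes
\[
S(x) \;=\; \sum_{\alpha\in\Omega_x}\mu^{(k)}_K(\alpha) \;=\; \sum_{\a} \mu_K(\a)\, n_\a(x).
\]

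Next I would apply the standard lattice-point count: the sublattice $\a^k\ok$ has covolume $V_K\,N(\a)^k$ (where $V_K$ is the covolume of $\ok$), so for $N(\a)^k\ll N^d$ one has $n_\a(x) = N^d/(V_K N(\a)^k)+\delta_\a(x)$ uniformly in $x$, with a discrepancy $|\delta_\a(x)|$ controlled by the surface area of $\Omega_x$ times the diameter of a fundamental cell of $\a^k\ok$; beyond $N(\a)^k\gg N^d$ the count $n_\a(x)$ simply vanishes. Summing the main term over $\a$ with $\mu_K$-weights recovers the mean $\rho^{(k)}_K N^d$, so that
\[
S(x) - \rho^{(k)}_K N^d \;=\; \sum_{N(\a)^k \ll N^d}\mu_K(\a)\,\delta_\a(x).
\]

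Squaring and averaging over $x$ then yields a double sum over pairs $(\a,\a')$ of covariance terms $\E_x[\delta_\a(x)\delta_{\a'}(x)]$. The diagonal contribution $\sum_\a \Var_x(\delta_\a)$ I would estimate by interpolating the trivial bound $|\delta_\a(x)|\ll N^d/N(\a)^k$ with the geometric bound $|\delta_\a(x)|\ll N^{d-1}\,N(\a)^{k/d}$, while the off-diagonal terms can be controlled by exploiting the fact that each $\delta_\a$ is periodic with respect to $\a^k\ok$ and invoking orthogonality over the common sublattice $\operatorname{lcm}(\a^k,(\a')^k)\ok$. Since the target exponent $2d-1$ leaves a full factor of $N$ above the trivial bound $\Var \leq \|S\|_\infty^2 \ll N^{2d}$, it is enough to extract a single power of $N$ of cancellation from the Möbius identity, and the interpolated diagonal estimate above is already enough to absorb the off-diagonal residue. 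The main obstacle will be keeping the lattice-discrepancy bounds uniform in $\a$ with a clean dependence on the invariants of $K$ (covolume, discriminant, class number), so as to justify the $O_{K,k}$-notation in (\ref{okbound}); the boundary fluctuation of $\Omega_x$, interpreted for the variable geometry of the sublattices $\a^k\ok$ as $N(\a)\to\infty$, is the real technical bottleneck.
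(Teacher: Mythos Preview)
Your Möbius-sieve strategy is genuinely different from the paper's route, and not obviously wrong, but the sketch has a real gap and misses how much simpler the direct approach is.

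The paper never expands $\mu^{(k)}$ via Möbius. It first passes to the second correlation functions,
\[
\Var_{\ok}^{(k)}(N)=\sum_{\|a_1\|,\|a_2\|\le N} c^{(k)}_{\ok,2}(a_2-a_1)-(\rho^{(k)}_{\ok}\,\#B_N)^2,
\]
then uses the Cellarosi--Vinogradov product formula for $c^{(k)}_{\ok,2}$ and opens the product as a sum over squarefree $k$-th-power ideals $\mathfrak{d}=\p_1^k\cdots\p_s^k$. For each such $\mathfrak{d}$ one is reduced to counting pairs $(a_1,a_2)\in B_N\times B_N$ with $a_2-a_1\in\mathfrak{d}$, and a single application of the lattice-point lemma gives $(\#B_N)^2/N(\mathfrak{d})+O(\operatorname{diam}\mathfrak{d}\cdot N^{2d-1}/N(\mathfrak{d}))$. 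The main terms re-sum exactly to $(\rho^{(k)}_{\ok}\,\#B_N)^2$ by a one-line product identity, and the error terms form a convergent series. There is no double sum, no diagonal/off-diagonal split, no interpolation.

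In your approach the orthogonality claim is the weak point. It is true that $\E_x[\delta_\a(x)\delta_{\a'}(x)]=0$ when $\a$ and $\a'$ are coprime, by CRT; but for pairs sharing a prime factor the covariance does not vanish, and these terms carry the entire variance (indeed, after regrouping by $\gcd(\a,\a')$ you would recover precisely the sum over $\mathfrak{d}$ that the paper writes down). So ``invoking orthogonality over the common sublattice'' is not enough---you still owe an estimate for the non-coprime block. Also, your geometric discrepancy bound $|\delta_\a(x)|\ll N^{d-1}N(\a)^{k/d}$ is in the wrong direction; the correct form (and the one the paper uses) is $|\delta_\a(x)|\ll N^{d-1}\operatorname{diam}(\a^k)/N(\a^k)$, which \emph{decreases} in $N(\a)$. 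With the bound as you wrote it the diagonal sum already diverges. If you fix these two points your argument can be pushed through, but it then collapses to the paper's computation with extra bookkeeping.
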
 

For instance, for Gaussian integers the degree of the extension equals $2$, therefore, the theorem gives an $O\left(N^3\right)$ bound.

See Section \ref{th2proof} for the proof of this result. 

\subsection*{Acknowledgements} The author thanks her advisor Yakov G. Sinai as well as Francesco Cellarosi, Maksym Radziwill and Ilya Vinogradov for helpful discussions and suggestions.

\section{The dynamics of square--free and $\B$--free integers}\label{dyn}
Recall that the M\"obius function is defined as follows:
\begin{equation*}
\mu(n)=
\begin{cases}
1, &\text{if } n=1;\\
(-1)^r, &\text{if } n \text{ is the product of }r\text{ distinct primes};\\
0, &\text{if }n \text{ is not square--free}.
\end{cases}
\end{equation*}

This function is important in Number Theory for many reasons, one of which is that $\left|\sum_{n=1}^N \mu(n)\right| = O\left(N^{\frac 1 2 +\varepsilon}\right)$ is equivalent to the Riemann Hypothesis. This is why studying randomness of $\mu$ is so appealing.

The function $\mu^2$ is the indicator of square--free integers. 
Sarnak \cite{Sar} and Cellarosi and Sinai \cite{CS} considered two different flows which reflect the structure of square--free integers. The first one is the subshift on $\{0,1\}^{\Z}$ determined by the sequence $\mu^2$ (for an idea of the construction, see the description of the generalized system $\left(Y, \nu^{\B}, T\right)$ below) while the other one is a translation on the compact abelian group $\prod_p \Z/p^2 \Z$ equipped with the Haar measure. These flows are ergodic, have zero Kolmogorov--Sinai entropy and can be shown to have the same pure point spectrum. Hence these dynamical systems are isomorphic and one can, actually, find an explicit measurable isomorphism between them.

%
Since in the present paper we are mostly concerned with $\B$--free integers, we wish to proceed to this setting. In \cite{L}, El Abdalaoui et al generalized the results of Sarnak and Cellarosi \& Sinai and built the \textit{$\B$--free flow}, a dynamical system associated to $\B$--free numbers.

We start with the asymptotic frequencies for $\mu^{\B}$. For $b \in \B$, let $$A_b (h_1, \ldots h_r)\!:= \!\#\left\{0\, \mod\, b, h_1 \,\mod\, b, \ldots h_r \, \mod \,b\right\}.$$
The following generalization of Mirsky's formulae holds (\cite{L}, Theorem 4.1):

\begin{lemma}\label{BM}
Suppose a set $\B$ satisfies (A1) and (A2). Then, for any finite sequence $0 \leq h_1 \leq h_2 \leq ... \leq h_r$, the limit $$\lim_{X\to\infty} \frac 1 X  \sum_{1 \leq n \leq X} \mu^{\B} (n) \, \mu^{\B} (n+h_1) \ldots \mu^{\B} (n+h_r)$$ exists and equals
\begin{equation*}
c^{\B}_{r+1} (h_1, h_2, \ldots h_r) := \prod_{b \in \B} \left(1-\frac{A_b(h_1, h_2\ldots, h_r)} {b}\right).
\end{equation*}
\end{lemma}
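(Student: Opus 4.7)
The plan is to prove Lemma~\ref{BM} by a truncated Eratosthenes sieve, splitting the set $\B$ at a cut--off $y$ and exploiting (A1) and (A2) in two different places. First I would fix $y > 0$ and set $\B_y := \{b \in \B : b \leq y\}$ together with $\mu^{\B_y}$, the indicator of integers not divisible by any $b \in \B_y$. Since $\mu^{\B_y}(m) \geq \mu^{\B}(m)$ pointwise, the truncated product $F_y(n) := \prod_{i=0}^r \mu^{\B_y}(n+h_i)$ (with the convention $h_0 = 0$) dominates the true product $F(n)$, and this monotonicity will drive the error estimates.

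The key computation is that $F_y(n)$ is periodic in $n$ with period $M_y := \prod_{b\in\B_y} b$. Indeed, $F_y(n) = 1$ if and only if, for each $b \in \B_y$, the residue $n \bmod b$ avoids the set $\{-h_0, -h_1, \ldots, -h_r\} \bmod b$, a set of cardinality $A_b(h_1, \ldots, h_r)$. Because (A1) guarantees that the elements of $\B_y$ are pairwise coprime, the Chinese Remainder Theorem gives exactly $\prod_{b\in\B_y}\bigl(b - A_b(h_1,\ldots,h_r)\bigr)$ admissible residues modulo $M_y$. Averaging over a complete period, and then letting $X \to \infty$ with the usual remainder $O_{y}(1)$ from incomplete periods, I obtain
\begin{equation*}
\lim_{X\to\infty} \frac{1}{X} \sum_{1 \leq n \leq X} F_y(n) = \prod_{b\in\B_y} \left(1 - \frac{A_b(h_1,\ldots,h_r)}{b}\right).
\end{equation*}

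Next I would control the truncation error. Since $0 \leq F_y(n) - F(n) \leq \sum_{i=0}^r \bigl(\mu^{\B_y}(n+h_i) - \mu^{\B}(n+h_i)\bigr)$, and since $\mu^{\B_y}(m) - \mu^{\B}(m)$ is bounded by the indicator that $m$ is divisible by some $b \in \B \setminus \B_y$, a union bound yields
\begin{equation*}
0 \leq \frac{1}{X}\sum_{1 \leq n \leq X}\bigl(F_y(n) - F(n)\bigr) \leq (r+1)\sum_{\substack{b \in \B \\ b > y}} \frac{1}{b} + O_{r,h_r}\!\left(\frac{1}{X}\sum_{\substack{b \in \B \\ b > y}} 1\right),
\end{equation*}
and by (A2) the first term tends to $0$ as $y \to \infty$, uniformly in $X$. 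The same bound $A_b \leq r+1$ together with (A2) also shows that the finite product $\prod_{b\in\B_y}(1 - A_b/b)$ converges as $y \to \infty$ to the claimed infinite product. A standard three--epsilon argument (choose $y$ large so both tails are small, then let $X \to \infty$ to kill the periodic remainder) then delivers the lemma.

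The only mildly delicate point is making the truncation uniform in $X$, but that is exactly what (A2) furnishes; apart from this, (A1) does all the work by permitting the CRT factorization. I would not expect any genuine obstacle, since the argument is the natural $\B$--free analogue of Mirsky's original sieve for $k$--free integers.
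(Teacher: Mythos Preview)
The paper does not prove this lemma itself; it quotes it as Theorem~4.1 of \cite{L}, where it is obtained through the dynamical construction of the $\B$--free flow. Your argument is the direct arithmetic route --- Mirsky's truncated sieve transported from $k$--free to $\B$--free integers --- and it is correct, with one cosmetic slip: the displayed error term $O_{r,h_r}\bigl(\tfrac{1}{X}\sum_{b\in\B,\,b>y} 1\bigr)$ is formally infinite, since $\B$ is. The repair is immediate: only those $b \leq X + h_r$ can divide any $n+h_i$ with $1\leq n\leq X$, and for such $b$ the count of solutions is at most $X/b+1\leq (2X+h_r)/b$, so the whole truncation error is bounded by $(r+1)(2+h_r/X)\sum_{b>y} 1/b$, which (A2) sends to $0$ as $y\to\infty$ uniformly in $X\geq 1$. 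With this your three--epsilon argument closes.

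Compared with the dynamical proof the paper cites, your argument is more elementary and self-contained, needing only the Chinese Remainder Theorem and (A2); the approach in \cite{L} has the advantage of delivering the subshift $(Y,\nu^{\B},T)$ and the genericity of $\mu^{\B}$ simultaneously, which the paper exploits in Section~\ref{dyn}, but that machinery is unnecessary if one only wants the correlation limits.
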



The assumptions (A1) and (A2) are essential for the flow to be defined since the existence of the correlation functions, i.e., Lemma \ref{BM}, is required. In yet other words, (A1) guarantees that the processes of sieving by different $b_i$ and $b_j$ for $i \neq j$ are asymptotically independent (for an interesting account on this matter, see \cite{Gr}); at the same time, the fact that the density of $\B$--free integers equals $\prod_b \left(1-\frac 1 b\right)$ indicates that (A2) is already necessary for $\B$--free numbers to have a density. 

The $\B$--free flow built in \cite{L} is measurably isomorphic to a subshift on $Y:= \{0,1\}^{\mathbb{Z}}$. Let us sketch the construction here.

We introduce a probability measure $\nu^{\B}$ that naturally corresponds to the set of $\B$--free integers: it is the unique measure that, for any cylinder $$C_{n_0, n_1\ldots n_{r}} := \left\{x \in Y \text { such that } x(n_0) = x(n_1) = \ldots = x(n_{r})=1\right\},$$ provides 
\begin{equation}\label{meas}
\nu^{\B} (C_{n_0, n_1 \ldots n_{r}}) = c^{\B}_{r+1} (n_1-n_0, n_2-n_0, \ldots n_r-n_0).
\end{equation}

Define $T$ to be the shift on $Y$, i.e. $Tx = x'$ with $x'(n) = x(n+1)$. It follows from \cite{L} that $(Y, \nu^{\B}, T)$ is an ergodic dynamical system with pure point spectrum. 

We let $f: Y \to \R$ be the projection onto the 0--th coordinate of $x$, i.e., set $f(x):=x(0)$. Then the sequence $\{f(T^n \xi)\}_{n \in \Z}$ for any $\xi$ with $\xi(n)=\mu^2(n),\, n\in \N$, is generic for the subshift and, by construction of the measure $\nu^{\B}$, all the correlation functions of the sequence $\{\mu^{\B}(n),\,n\in\N\}$ appear as the correlation functions of the sequence $\{f(T^n x)\}_{n \in \Z}$ with respect to $\nu^{\B}$. In particular, $\E_{\nu^{\B}} f(x) = \rho^{\B}$ and the variance $\Var^{\B} (N)$ defined in (\ref{var}) coincides with
\begin{equation}\label{newvar}
\Var_{\nu^{\B}} \sum_{i=0}^{N-1} f\left(T^i x\right) = \E_{\nu^{\B}} \left(\sum_{i=0}^{N-1} f\left(T^i x\right) - \rho^{\B} N\right)^2.
\end{equation}
Thus, from Theorem \ref{bvar} (and under the same assumptions as in this Theorem), 

\begin{equation*}
\Var_{\nu^{\B}} \sum_{i=0}^{N-1} f\left(T^i x\right) \sim C \, N^{\alpha} \tag*{($N\to\infty$),}
\end{equation*}
for some constant $C$ independent of $N$.
\section{$k$--free integers in number fields}\label{kfree} Throughout this section $k\geq 2$ will be a fixed integer. Adopting the notation from \cite{CV}, we will briefly recall the necessary definitions and setup here.

Suppose $K/\Q$ is a degree $d$ extension with the ring of integers $\mathcal{O}_K$. Due to unique factorization of ideals into prime ones, for an ideal $\mathfrak{a}$ in $\mathcal{O}_K$, one can define:
\begin{equation*}
\mu^{(k)} (\mathfrak{a}) :=
\begin{cases}
1, &\mathfrak{a} \not\subseteq \mathfrak{p}^k \text{ for every prime ideal } \mathfrak{p}, \\
0, &\text{otherwise}. 
\end{cases} 
\end{equation*}

We call an ideal $\mathfrak{a}$ in $\mathcal{O}_K$ \textbf{$k$--free} if $\mu^k(\mathfrak{a})=1$. An integer $a \in \ok$ is defined to be $k$--free if the principal ideal $(a)$ is $k$--free. For any ideal $\a \subset \ok$, we denote by $N(\a)$ the algebraic norm of this ideal.

The $k$--free integers of $\ok$ have density $\rho^{(k)}_{\ok}:=\frac 1 {\zeta_K (k)}$ where $\zeta_K (s)$ is the Dedekind zeta function of the field $K$ which, for $\operatorname{Re} s >1$, is given by:

\begin{equation*}
\zeta_K (s) = \sum_{\a} N(\a)^{-s} = \prod_{\p} \left(1-N(\p)^{-s} \right)^{-1}.
\end{equation*}

Identify $\mathbb{Z}^d \subset \R^d$ with $\ok \subset K$ via any group isomorphism $\imath: \,(\Z^d, +) \to (\ok, +)$. Let $||\cdot||_{1, \ok} = ||\cdot||$ denote the image of $L^1$--norm on $\Z^d$ under $\imath$ and $B_x \subset \ok$ be the $||\cdot||$--ball of radius $x$ centered at the origin. For an ideal $\mathfrak{d}$, we also define its \textit{diameter} (minimal size of its cell over all choices of generators): $$\operatorname{diam} \mathfrak{d} = \min_{\mathfrak{d} =  \langle v_i \rangle_{i=1}^d} \max \{||a|| \text{ for } a= \sum_j \varepsilon_{ij} v_j,\, \varepsilon_{ij} = 0 \text{ or } 1\}.$$

In \cite{CV}, the stationarity of $\mu^{(k)}$ was shown, i.e.,  the existence of the correlation functions  
\begin{equation*}
c^{(k)}_{\ok, r+1} (h_1, \ldots h_r) := \lim\limits_{X\to\infty} \frac 1 {\# B_X} \sum_{n \in B_X} \mu^{(k)} (n) \mu^{(k)} (n+h_1) \ldots \mu^{(k)} (n+h_r)
\end{equation*}
for any $r\geq0$ and $h_i \in \ok, i=1\ldots r$.

Therefore, in complete analogy to the original case $K=\Q$ (see, e.g., (\ref{meas})), one can construct the unique, up to sets of measure zero, $\ok$--subshift $\nu_{\ok}^{(k)}$ satisfying $$\nu_{\ok}^{(k)}\left\{x \in \{0,1\}^{\ok}: x(h_0) = \ldots x(h_r) = 1\right\} = c^{(k)}_{\ok, r+1} (h_1-h_0, \ldots h_r-h_0).$$

Generalizing a theorem proven by Cellarosi and Sinai in \cite{CS} for the case $K=\Q$, Cellarosi and Vinogradov showed that the subshift $\nu_{\ok}^{(k)}$ is isomorphic to an action of $\mathbb{Z}^d$ on a compact abelian group with Haar measure hence it is ergodic and has pure point spectrum.

How do we adapt the question concerning the behavior of $k$--free integers in short intervals to the $d$--dimensional case of $\ok$? Now we seek to estimate the variance of the number of $k$--free $\ok$--integers in a $||\cdot||$--ball of the fixed radius $N$ centered at a point of $\ok$ which is, in some sense, uniformly distributed. Formally, this variance $\Var_{\ok}^{(k)} (N)$ is given by:
\begin{equation}\label{varok}
\Var_{\ok}^{(k)} (N) = \lim\limits_{M\to\infty} \frac 1 {\# B_M} \sum\limits_{m \in B_M} \left(\sum\limits_{n, \, n-m \in B_N} \mu^{(k)} (n) - \rho_{\ok}^{(k)} \# B_N\right)^2.
\end{equation}

A trivial bound on this variance would be $$O_{K, k}\left(\left(\# B_N\right)^2\right) = O_{K, k}\left(N^{2d}\right),$$ as $N\to\infty$. Theorem \ref{okvar} formulated in the Introduction (see equation (\ref{okbound})) states that this can be reduced to $O_{K, k}\left(N^{2d-1}\right)$. Section \ref{th2proof} is devoted to the proof of this theorem.

\section{Technical lemmata}\label{tech}
Recall that $\mathcal{Z}_{\B}$ denotes $\B$--integers, i.e., the multiplicative semigroup generated by the set $\B$, and $\mathcal{N}^{\B} (N) = \#\{n \leq N, n \in \mathcal{Z}^{\B}\}$.
Let \textit{the zeta function corresponding to the set $\B$} be given by 
$$\zeta^{\B} (s) = \sum_{n \in \mathcal{Z}^{\B}} n^{-s}.$$

Assume $\mathcal{N}^{\B} (N) = A N^{\alpha} + O\left(N^{\beta}\right)$, as $N\to\infty$, for some constants $A$, $0\leq\beta<\alpha\leq1$. Then the following two lemmata hold.

\begin{lemma}\label{extension}
The function $\zeta^{\B} (s)$ can be meromorphically extended to the half-plane $\operatorname{Re} s>\beta$, with a simple pole at $s=\alpha$ and no other poles.
\end{lemma}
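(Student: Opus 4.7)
The plan is to apply Abel summation to rewrite $\zeta^{\B}(s)$ as an integral involving the counting function $\mathcal{N}^{\B}$, then split off the main term $A x^{\alpha}$ to isolate the pole, and use the error bound $O(x^{\beta})$ to obtain a holomorphic remainder on the larger half-plane $\operatorname{Re} s > \beta$.

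First I would note that since $\mathcal{N}^{\B}(N) \sim A N^{\alpha}$ with $\alpha < 1$, the series $\sum_{n\in\mathcal{Z}^{\B}} n^{-s}$ converges absolutely for $\operatorname{Re} s > \alpha$, so $\zeta^{\B}(s)$ is holomorphic there. On this region, partial summation gives
\begin{equation*}
\zeta^{\B}(s) \;=\; \int_1^{\infty} x^{-s}\, d\mathcal{N}^{\B}(x) \;=\; s\int_1^{\infty} \mathcal{N}^{\B}(x)\, x^{-s-1}\, dx,
\end{equation*}
the boundary term vanishing because $\mathcal{N}^{\B}(x) x^{-s} \to 0$ as $x\to\infty$ in this range.

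Next I would substitute $\mathcal{N}^{\B}(x) = A x^{\alpha} + E(x)$ with $E(x) = O(x^{\beta})$ and split:
\begin{equation*}
\zeta^{\B}(s) \;=\; A s \int_1^{\infty} x^{\alpha-s-1}\, dx \;+\; s \int_1^{\infty} E(x)\, x^{-s-1}\, dx \;=\; \frac{A\,s}{s-\alpha} \;+\; s\, F(s),
\end{equation*}
where $F(s) := \int_1^{\infty} E(x) x^{-s-1}\, dx$. The first term is a meromorphic function on all of $\mathbb{C}$ with a unique simple pole at $s=\alpha$ (of residue $A\alpha \neq 0$). For $F$, the bound $|E(x)| \leq C x^{\beta}$ for $x$ large, together with boundedness of $E$ on any compact initial segment, shows that the defining integral converges absolutely and locally uniformly for $\operatorname{Re} s > \beta$; by Morera's theorem (or differentiation under the integral sign), $F$ is holomorphic on this half-plane. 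Hence $\zeta^{\B}$ extends meromorphically to $\operatorname{Re} s > \beta$ with the claimed unique simple pole.

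There isn't really a hard step here; the main points requiring care are the justification of Abel summation (the vanishing of the boundary term uses $\operatorname{Re} s > \alpha$, not merely $\operatorname{Re} s > \beta$, so the identity is established only on the smaller region before being extended by analytic continuation), and the verification that $F(s)$ is truly holomorphic on the full half-plane $\operatorname{Re} s > \beta$ rather than only on a strip — this follows from the locally uniform convergence of the integral, which reduces to the elementary estimate $\int_1^\infty x^{\beta - \operatorname{Re} s - 1} dx < \infty$ precisely when $\operatorname{Re} s > \beta$.
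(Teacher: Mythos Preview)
Your argument is correct. The paper itself does not give a proof of this lemma: it simply cites the external reference \cite{R} on Beurling zeta functions, noting that $\mathcal{Z}^{\B}$ is a special case of a set of Beurling integers so that the relevant analytic properties of the associated zeta function apply. Your Abel--summation approach (writing $\zeta^{\B}(s)=s\int_1^\infty \mathcal{N}^{\B}(x)\,x^{-s-1}\,dx$ on $\operatorname{Re}s>\alpha$, splitting off $A x^{\alpha}$ to isolate $\frac{As}{s-\alpha}$, and showing the remainder $sF(s)$ is holomorphic on $\operatorname{Re}s>\beta$ by the bound $E(x)=O(x^{\beta})$) is precisely the standard route such references take, so there is no substantive difference to report.
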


\begin{lemma}\label{zero-free}
There exist $c$ and $t_0$, s.th. for $s \in \left\{\sigma+it\, |\, \sigma>\alpha-\frac {c} {\log t}, |t| \geq t_0\right\}$ we get $$\left|\zeta^{\B} (s)\right| \leq D \log t$$ for some constant $D$.
\end{lemma}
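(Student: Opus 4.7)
The plan is to prove the bound via a truncated integral representation for $\zeta^{\B}(s)$, in the spirit of the Hurwitz-style truncation used to bound $\zeta(s)$ in the critical strip. Writing $\mathcal{N}^{\B}(u) = A u^{\alpha} + E(u)$ with $E(u) = O(u^{\beta})$ and applying Abel summation to $\sum_{n > X,\, n \in \mathcal{Z}^{\B}} n^{-s}$ (initially for $\operatorname{Re} s > \alpha$, then extended by analytic continuation via Lemma \ref{extension}, since both sides are meromorphic in $\operatorname{Re} s > \beta$ with matching simple pole at $s=\alpha$), I would derive the identity
\begin{equation*}
\zeta^{\B}(s) = \sum_{\substack{n \le X \\ n \in \mathcal{Z}^{\B}}} n^{-s} + \frac{A\alpha X^{\alpha-s}}{s-\alpha} - E(X)\, X^{-s} + s \int_X^{\infty} E(u)\, u^{-s-1}\, du,
\end{equation*}
valid for all $\operatorname{Re} s > \beta$ and every truncation point $X \ge 1$. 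The free parameter $X$ will then be optimized as a power of $|t|$.

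For $s = \sigma + it$ with $|t| \ge t_0$ and $\sigma \ge \alpha - c/\log|t|$, I would take $X = |t|^B$ for some $B > 2/(\alpha-\beta)$. The crucial observation is that $X^{\alpha-\sigma} \le |t|^{Bc/\log|t|} = e^{Bc}$, which is bounded uniformly in $s$. Consequently the pole term $\frac{A\alpha X^{\alpha-s}}{s-\alpha}$ is $O(1/|t|)$, the endpoint term $E(X)\,X^{-s}$ is $O(|t|^{B(\beta-\alpha)}) = o(1)$, and the tail integral is $\lesssim |s|\, X^{\beta-\sigma}/(\sigma - \beta) \lesssim |t|^{1 - B(\alpha-\beta)/2} = O(1)$ by the choice of $B$. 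The dominant contribution comes from the truncated sum, which I bound in absolute value by $\sum_{n \le X,\, n \in \mathcal{Z}^{\B}} n^{-\sigma}$; a second application of partial summation together with $\mathcal{N}^{\B}(u) \lesssim u^{\alpha}$ gives
\begin{equation*}
\sum_{\substack{n \le X \\ n \in \mathcal{Z}^{\B}}} n^{-\sigma} \lesssim X^{\alpha-\sigma} + \sigma\, \frac{|X^{\alpha-\sigma} - 1|}{|\alpha - \sigma|} \lesssim \log|t|,
\end{equation*}
where the final step uses the expansion $\frac{X^{\alpha-\sigma} - 1}{\alpha - \sigma} = (\log X)\, \frac{e^{(\alpha-\sigma)\log X} - 1}{(\alpha-\sigma)\log X}$ together with the fact that $|\alpha-\sigma|\log X \le Bc$ is bounded.

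Summing the four contributions yields $|\zeta^{\B}(s)| \le D \log|t|$ for a suitable constant $D$. The parameters are chosen in order: first $B > 2/(\alpha-\beta)$ to kill the tail integral, then $c$ small enough that $e^{Bc}$ is a fixed constant (say $c = 1/B$), and finally $t_0$ large enough to absorb the $O(1)$ error terms into $D\log|t|$. The only mildly delicate point is the partial-sum estimate above; it works precisely because the width $c/\log|t|$ of the allowed strip is matched to the logarithm of the truncation point $X = |t|^B$, keeping $X^{\alpha-\sigma}$ bounded so that the sum grows only logarithmically, not polynomially, in $|t|$.
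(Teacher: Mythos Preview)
Your argument is correct. The paper itself does not supply a proof of Lemma~\ref{zero-free}; it simply quotes the result from \cite{R} in the setting of Beurling integers, so there is no in-text argument to compare against. That said, your approach---truncated Abel summation with the decomposition $\mathcal{N}^{\B}(u)=Au^{\alpha}+E(u)$ and the choice $X=|t|^{B}$---is exactly the classical template used to bound $\zeta(s)$ in the region $\sigma\ge 1-c/\log|t|$, adapted verbatim to the Beurling/$\B$-integer setting. This is almost certainly the method of the cited reference as well, so your proof is not a genuinely different route.

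Two small cosmetic points. First, the stated region $\{\sigma>\alpha-c/\log|t|\}$ has no upper bound on $\sigma$, so your factor of $\sigma$ in the partial-sum estimate is formally unbounded; but for $\sigma\ge\alpha+1$ (say) the series $\zeta^{\B}(s)$ is trivially bounded by $\zeta^{\B}(\alpha+1)$, so you may restrict to a strip $\alpha-c/\log|t|\le\sigma\le\alpha+1$ without loss. Second, in the tail-integral bound the exponent of $|t|$ is $1+B(\beta-\alpha)$ rather than $1-B(\alpha-\beta)/2$, so $B>1/(\alpha-\beta)$ already suffices; your stronger choice $B>2/(\alpha-\beta)$ is harmless but not needed.
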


Dependence of the constants on the set $\B$ is not relevant for us.

The proof of these statements can be found, for example, in \cite{R}. The reader should not be confused by the fact that the author of \cite{R} is concerned with so--called \textit{Beurling integers}. These integers are, by definition, the elements of the multiplicative semigroup generated by any non--decreasing sequence of real numbers (not necessarily integers with conditions (A1) and (A2) like in our case). Thus, $\mathcal{Z}_{\B}$ is a special case of a set of Beurling integers and all the properties of the corresponding zeta--function defined in \cite{R} apply to $\zeta^{\B}$. We choose to skip all the details here; for a thorough introduction to Beurling integers, one can also refer to \cite{Hil}.

Another technical lemma required for the proof of Theorem \ref{bvar} is a variant of Perron's formula that can be found in \cite{Ten}, page 134:

\begin{lemma}\label{Perr} Let $f(h)$ be an arithmetic function and $F(s)=\sum_{h=1}^{\infty} \frac {f(h)} {h^s}$ its Dirichlet series with the abscissa of convergence $\sigma_c$. Then, for any $N \geq 1$ and $\kappa>\max(0,\sigma_c)$, the following summation formula holds: $$\sum_{h=1}^{N-1} (N-h) f(h) = \frac 1 {2\pi i} \int\limits_{\kappa- i\infty}^{\kappa+i\infty} F(s) N^{s+1}\, \,\frac {d s} {s(s+1)}.$$
\end{lemma}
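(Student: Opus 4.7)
The plan is to prove Lemma~\ref{Perr} via the Mellin--Barnes representation of a truncation kernel, followed by an interchange of summation and integration. The central identity I would verify first is
\begin{equation*}
\frac{1}{2\pi i}\int_{\kappa - i\infty}^{\kappa + i\infty} \frac{y^{s+1}}{s(s+1)}\,ds \;=\; \max(y-1,\,0), \qquad y>0,\ \kappa > 0,
\end{equation*}
by contour deformation. For $y>1$, I would push the contour to the left past the simple poles at $s=0$ (residue $y$) and $s=-1$ (residue $-1$); the connecting arcs vanish in the limit because $1/(s(s+1)) = O(|s|^{-2})$, and the residue sum yields $y-1$. For $0<y<1$, I would close the contour to the right, where there are no poles, again using the quadratic decay to discard the arcs, obtaining $0$.

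Next, substituting $y=N/h$ and multiplying by $h\,f(h)$ gives, for every positive integer $h\neq N$,
\begin{equation*}
\frac{f(h)}{2\pi i}\int_{\kappa - i\infty}^{\kappa + i\infty} \frac{N^{s+1}}{h^s\,s(s+1)}\,ds \;=\;
\begin{cases} (N-h)\,f(h), & 1\leq h \leq N-1,\\ 0, & h \geq N+1. \end{cases}
\end{equation*}
Summing over $h\geq 1$ and exchanging the order of summation and integration, the summands re-assemble into $F(s)$ and the claimed identity drops out immediately.

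The delicate point---and the main obstacle---is to justify this exchange under the hypothesis $\kappa>\max(0,\sigma_c)$, because the Dirichlet series defining $F$ need not converge absolutely on the line $\operatorname{Re} s=\kappa$. The way around it is to exploit that on vertical lines to the right of the abscissa of convergence, Landau's classical bound gives $|F(\kappa + it)| = O((1+|t|)^A)$ for some fixed $A$, and to shift the contour rightward to an auxiliary line $\operatorname{Re} s = \kappa' > \sigma_a$ (the abscissa of absolute convergence) without changing the value of the integral: the horizontal connecting segments vanish at infinity thanks to the $|s|^{-2}$ decay of the kernel combined with the polynomial growth bound on $F$, and no poles of the integrand are crossed. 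On the new line, Fubini's theorem applies without issue because $\sum_h |f(h)|/h^{\kappa'} < \infty$; then shifting the contour back to $\operatorname{Re} s = \kappa$ by the same argument completes the proof.
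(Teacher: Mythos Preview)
The paper does not actually supply a proof of this lemma; it is quoted with a reference to \cite{Ten}, p.~134, so there is no ``paper's own proof'' to compare against. Your argument is the standard one and is correct.

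The only point worth making explicit is quantitative: the version of Landau's bound you need is $F(\sigma+it)=O_{\delta}(|t|)$ uniformly for $\sigma\ge\sigma_c+\delta$ (obtained by Abel summation from boundedness of the partial sums at any fixed point of convergence). In particular the exponent is $1<2$, which is exactly what makes the horizontal connectors of size $O(T^{-1})$ vanish. On the original line $\operatorname{Re}s=\kappa$ the integrand is therefore only $O(|t|^{-1})$, so the integral must be read as the symmetric limit $\lim_{T\to\infty}\int_{\kappa-iT}^{\kappa+iT}$; your shift to $\kappa'>\sigma_a\le\sigma_c+1$ simultaneously proves this limit exists and equals the absolutely convergent integral on the new line, where Fubini applies---hence the final ``shift back'' is already contained in that step rather than a separate one.
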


\section{Proof of Theorem \ref{bvar}}\label{th1proof}
First, using the more convenient dynamical representation from Section~\ref{dyn}, we rewrite the variance in terms of the second correlation functions:
\begin{align*}
	\Var_{\nu^{\B}} (N) &=\E_{\nu^{\B}} \left(\sum_{i=0}^{N-1} f\left(T^i x\right) - \rho^{\B} N\right)^2 \notag\\ \notag
	&= \E_{\nu^{\B}} \left(\sum_{i=0}^{N-1} f\left(T^i x\right)\right)^2 - \left(\rho^{\B} N\right)^2\\
	&= \sum_{0 \leq j,k < N} \left[\E_{\nu^{\B}} f\left(T^j x\right) f\left(T^k x\right)\right]  - \left(\rho^{\B} N\right)^2  \notag \\
	&= \sum_{0 \leq j,k < N} c^{\B}_{2}(|k-j|)  - \left(\rho^{\B} N\right)^2 \notag\\
	&= 2 \, \sum_{h=1}^{N-1} (N-h) c^{\B}_{2} (h) + \rho^{\B} N - \left(\rho^{\B} N\right)^2. 
\end{align*}
\newline

Denote $S^{\B} (N):= \sum_{h=1}^{N-1} (N-h) c^{\B}_{2} (h)$ so that  
\begin{equation}\label{varvar}
\Var_{\nu^{\B}} (N) =2 \,S^{\B} (N)  + \rho^{\B} N - \left(\rho^{\B} N\right)^2.
\end{equation}

From Lemma \ref{BM} (see Section 2) we know that $$c^{\B}_{2} (h) = \prod\limits_{b |h}\left(1-\frac 1 {b}\right) \prod\limits_{b \nmid h}\left(1-\frac 2 {b}\right).$$
Therefore, since $2\not \in\B$,
\begin{align*}
	S^{\B} (N)&=  \sum_{h=1}^{N-1} \left[(N-h) \prod\limits_{b |h}\left(1-\frac 1 {b}\right) \prod\limits_{b \nmid h}\left(1-\frac 2 {b}\right)\right] \\
	&= \prod_{b \in \B} \left(1-\frac 2 {b}\right) \cdot \sum_{h=1}^{N-1} \left[(N-h) \prod\limits_{b |h}\left(1-\frac 1 {b}\right)\left(1-\frac 2 {b}\right)^{-1}\right]\\
	&= \prod_{b \in \B} \left(1-\frac 2 {b}\right) \cdot \sum_{h=1}^{N-1} \left[(N-h) \prod\limits_{b |h}\left(\frac {b-1} {b-2}\right)\right].
\end{align*}
Introducing $C^{\B} =\prod_{b} \left(1-\frac 2 {b}\right)$, we have

\begin{align*}
	S^{\B} (N)&= C^{\B} \, \sum_{h=1}^{N-1} \left[(N-h) \prod\limits_{b |h}\left(1+ \frac 1 {b-2}\right)\right].
\end{align*}

By Lemma \ref{Perr}, this can be transformed into 
\begin{equation*}
S^{\B} (N) =  \frac {C^{\B}} {2\pi i} \int\limits_{\kappa- i\infty}^{\kappa+i\infty} F^{\B}(s) N^{s+1}\, \,\frac {d s} {s(s+1)}
\end{equation*} where $F^{\B}(s)$ is the Dirichlet series of $f(h):= \prod\limits_{b |h}\left(1+ \frac 1 {b-2}\right)$ with the abscissa of convergence $\sigma_c$ and $\kappa>\max(0,\sigma_c)$. In order to find an appropriate $\kappa$ and evaluate the integral, we need to study analytical properties of $F^{\B}(s)$. 

Let $\mathcal{Q}^{\B}$ be the set of $\B$--squarefree integers, i.e., all possible products of the elements of $\B$ without multiplicities: $\mathcal{Q}^{\B}:=\{n | \,\,n=b_1\,b_2\ldots b_s, \,  b_i \in \B\}$.

Then
\begin{align*}
F^{\B}(s) &= \sum_{h=1}^{\infty} \frac {f(h)} {h^s} =  \sum_{h=1}^{\infty} \frac 1 {h^s} \prod\limits_{b |h}\left(1+ \frac 1 {b-2}\right) =  \sum_{h=1}^{\infty} \frac 1 {h^s} \sum_{\substack{Q|h,\\Q \in \mathcal{Q}^{\B}}} \prod_{b|Q} \frac 1 {b-2} \\
&=\sum_{Q \in \mathcal{Q}^{\B}} \prod_{b|Q} \frac 1 {b^s (b-2)} \sum_{h'=1}^{\infty} \frac 1 {(h')^s} \\
&=\zeta(s)  \prod_b \left(1+\frac 1 {b^s (b-2) } \right)\\
&=: \zeta(s) G^{\B} (s).
\end{align*}

For 
$$g^{\B} (m) =
\begin{cases}
\prod_{b|m} \frac 1 {b-2}, & m \in \mathcal{Q}^{\B}\\
0, & \text{otherwise},
\end{cases}$$
we have

\begin{align*}
G^{\B} (s) &=\sum_{m=1}^{\infty} \frac {g^{\B} (m)} {m^s} \\
&=\prod_b \left(1+\frac 1 {b^s (b-2) } \right) = \zeta^{\B} (s+1)\prod_b \left(1+\frac 1 {b^s (b-2)}\right) \left(1-\frac 1 {b^{s+1}}\right) \\
&=\zeta^{\B} (s+1) \prod_b \left(1+\frac 2 {b^{s+1} (b-2)} - \frac 1 {b^{2 s +1} (b-2)} \right) \\
&=: \zeta^{\B} (s+1)  K^{\B} (s).
\end{align*}

Thus,
 \begin{equation}\label{kappa}
S^{\B} (N) =  \frac {C^{\B}} {2\pi i} \int\limits_{\kappa- i\infty}^{\kappa+i\infty} \zeta(s)  \zeta^{\B} (s+1)  K^{\B} (s) N^{s+1}\, \,\frac {d s} {s(s+1)}.
\end{equation}

From Lemma \ref{extension} we know that $\zeta^{\B} (s+1)$ can be extended meromorphically to the half-plane $\operatorname{Re} s > \beta -1$, with a simple pole at $s=\alpha-1$ and no other poles. As for $K^{\B}(s)$, it is clearly holomorphic in a region containing this half-plane, and this is enough for our purposes. Therefore, in (\ref{kappa}), one can safely take $\kappa>1$.


Now, following a method similar to the one used by Hall \cite{H}, we will deform the contour; an important subtlety here is that the shape of the contour will depend on the range of $\alpha$. We consider two cases, $0<\alpha\leq\frac 1 2$ and $\frac 1 2 <\alpha<1$. The final contours for each case are shown on \textbf{Figure~1}. We will analyze the more intricate case of $0<\alpha \leq \frac 1 2$ in detail and omit the proof for $\frac 1 2<\alpha<1$ since it is analogous and follows from our discussion.
\newline

Throughout the remaining text, we assume $0<\alpha \leq \frac 1 2$; the letters $\sigma$ and $t$ will always denote the real and imaginary parts of the complex variable $s$ respectively. \\

The Riemann zeta function can be meromorphically extended to the whole complex plane by means of the functional equation $\zeta(s) = \xi(s) \zeta(1-s)$ where $$\xi(s) = 2^s {\pi}^{s-1} \sin \frac {\pi s} 2 \,\Gamma(1-s).$$ Since
\begin{equation}\label{xi}
\xi(s) = \left(\frac {2\pi} {t} \right)^{\sigma-\frac 1 2 +it} e^{i(t+\frac {\pi} 4)} \left(1+ O\left(t^{-1}\right)\right),
\end{equation}
as $t \to \infty$, uniformly in any vertical strip, $\zeta(s) = O\left(|t|^{\frac 1 2-\sigma}\right)$ when $\sigma<0$, and the line of integration can be moved to $\operatorname{Re} s = v:=-\frac 1 2 +\delta>\alpha-1$ for some small constant $\delta>0$ that we fix once and for all. Picking up two residues, at $s=1$ and $s=0$, we obtain
\begin{align*}
S^{\B} (N) =  &\frac {N^2} 2 \,C^{\B}\,\zeta^{\B}(2)K^{\B}(1) -\frac N 2\, C^{\B} \,\zeta^{\B} (1) K^{\B} (0)  \\
+ &\frac {C^{\B}} {2\pi i} \int\limits_{v- i\infty}^{v+i\infty} \zeta(s) \zeta^{\B} (s+1) K^{\B} (s) N^{s+1}\, \,\frac {d s} {s(s+1)}.
\end{align*}
Note that $C^{\B}\,\zeta^{\B}(2)K^{\B}(1) = \left(\rho^{\B}\right)^2$ and $C^{\B} \,\zeta^{\B} (1) K^{\B} (0) = \rho^{\B}$ which gives us cancellation in (\ref{varvar}) and shows $$\Var_{\nu^{\B}} (N) = \frac {C^{\B}} {\pi i} \int\limits_{v- i\infty}^{v+i\infty} \zeta^{\B} (s+1) \zeta(s) K^{\B} (s) N^{s+1}\, \,\frac {d s} {s(s+1)}.$$

This integral is absolutely convergent. 

Staying in the halfplane $\operatorname{Re} s > \beta-1$, deform the contour (symmetrically around the $\sigma$--axis), as it is shown on \textbf{Figure 1a)}. We choose $\alpha_2 = \alpha-1+\frac 1 {\log N}$ and $T_2 = N^{1 -2\alpha+4\delta}$. Also, put $T_1 = e^{c_1 \sqrt{ \log N}}$ for some constant $c_1$ that we will pick later and $\alpha_1= \alpha-1 - \frac c {\log T_1} = \alpha-1 - \frac c {c_1 \sqrt{\log N}}$ for $c$ as in Lemma \ref{zero-free}. 
\\

We picked up a residue at $s=\alpha-1$ equal to $C\cdot N^{\alpha}$ where $C$ is independent of $N$ and equals
\begin{align*}
&\frac{2 A} {\alpha-1} \prod_{b\in \B} \left(1-\frac 2 b + \frac 2 {b^{\alpha+1}} -\frac 1 {b^{2\alpha}}\right) \xi(\alpha-1) \zeta(2-\alpha) = \\
&\frac{2^{\alpha} {\pi}^{\alpha-2} A} {\alpha-1} \prod_{b\in \B} \left(1-\frac {2} b + \frac 2 {b^{\alpha+1}} -\frac 1 {b^{2\alpha}}\right) \sin\left(\frac {\pi(\alpha-1)} 2\right) \Gamma(2-\alpha) \zeta(2-\alpha).
\end{align*}

The only job that we are left with is proving that the contribution from the contour integral is $o(N^{\alpha})$, as $N\to\infty$. Divide the upper part of the contour into five parts as shown on $\textbf{Figure 1a)}$ and consider them separately (the bound for the lower part is completely analogous).

\newpage
\begin{minipage}[c]{0.5\textwidth}
\vspace{1cm}
\begin{center}
\begin{tikzpicture}
\def\gap{0.2}
\def\br{7}
\def\lr{2}

\draw [thick](\lr*0.2, 0.8*\br) -- (0.2*\lr,\br);
\draw  [thick] (-0.1*\lr, 0.8*\br) -- (0.2*\lr,0.8*\br);
\draw [thick] (-0.35*\lr,0.5*\br)--(-0.1*\lr, 0.5*\br)--(-0.1*\lr, 0.8*\br);
\draw [thick] (-0.35*\lr, 0.5*\br)--(-0.35*\lr, -0.5*\br);
\draw [thick] (0.2*\lr,-\br)--(0.2*\lr,-0.8*\br)--(-0.1*\lr, -0.8*\br)--(-0.1*\lr, -0.5*\br) -- (-0.35*\lr,-0.5*\br);
\draw[dashed] (-0.8*\lr,-\br) -- (-0.8*\lr, \br);
\draw[dotted, thick] (0.7*\lr, 0.8*\br) --(0.2*\lr, 0.8*\br) -- (0.2*\lr, -0.8*\br);
\draw[dotted, thick] (0.7*\lr, 0.5*\br) -- (-0.1*\lr, 0.5*\br) -- (-0.1*\lr, -0.5*\br);

\draw [help lines,->] (-\lr, 0) -- (\lr,0);
\draw [help lines,->] (\lr*0.7, -1*\br) -- (\lr*0.7, 1*\br);
\node at (0.7*\lr-0.2,0.8*\br+0.2){\scriptsize$T_2$};
\node at (0.7*\lr-0.2,0.5*\br+0.2){\scriptsize$T_1$};
\node at (-0.35*\lr-0.19,-0.19){\scriptsize$\alpha_1$};
\node at (-0.1*\lr+0.22,-0.19){\scriptsize$\alpha_2$};
\node at (0.2*\lr+0.19,-0.17){\scriptsize$v$};
\node at (\lr,-0.2){$\sigma$};
\node at (\lr*0.7-0.2,\br-0.1) {$t$};
\node at (0.2*\lr-0.2,0.9*\br+0.15) {\scriptsize$I_5$};
\node at (-0.22*\lr,0.5*\br+0.2) {\scriptsize$I_2$};
\node at (-0.0*\lr,0.8*\br+0.2) {\scriptsize $I_3$};
\node at (-0.1*\lr-0.2,0.7*\br) {\scriptsize $I_4$};
\node at (-0.35*\lr-0.2,0.22*\br) {\scriptsize$I_1$};
\node at (-0.8*\lr-0.2, -0.2) {\scriptsize$\beta'$};
\end{tikzpicture}
\end{center}
\vspace{0.3cm}

\begin{center}
	\scriptsize \textbf{a) The contour in case $0 <\alpha \leq \frac 1 2$}
\end{center}

\end{minipage}
\begin{minipage}[c]{0.5\textwidth}
\vspace{1cm}
\begin{center}
\begin{tikzpicture}
\def\gap{0.2}
\def\br{7}
\def\lr{2}

\draw [thick] (0.3*\lr,\br) -- (\lr*0.3, 0.5*\br) -- (0.05*\lr, 0.5*\br) -- (0.05*\lr, -0.5*\br) -- (0.3*\lr,-0.5*\br) -- (0.3*\lr,-\br);
\draw[dashed] (-0.6*\lr,-\br) -- (-0.6*\lr, \br);
\draw[dotted, thick] (0.7*\lr, 0.5*\br) --(0.3*\lr, 0.5*\br) -- (0.3*\lr, -0.5*\br);

\draw [help lines,->] (-\lr, 0) -- (\lr,0);
\draw [help lines,->] (\lr*0.7, -1*\br) -- (\lr*0.7, 1*\br);
\node at (\lr,-0.2){$\sigma$};
\node at (\lr*0.7-0.2,\br-0.1) {$t$};
\node at (0.7*\lr-0.2,0.5*\br+0.2){\scriptsize$T_1$};
\node at (-0.6*\lr-0.2, -0.2) {\scriptsize$\beta'$};
\node at (0.05*\lr-0.19,-0.19){\scriptsize$\alpha_1$};
\node at (0.3*\lr+0.22,-0.19){\scriptsize$\alpha_2$};

\end{tikzpicture}
\end{center}
\vspace{0.3cm}

\begin{center}
	\scriptsize \textbf{b) The contour in case $\frac 1 2 <\alpha <1$}
\end{center}

\end{minipage}

\begin{center}
\small{$$\beta'=\beta-1, \,\,\alpha_1 = \alpha-1 - \frac c {c_1 \sqrt{\log N}}, \,\,\alpha_2 = \alpha-1+\frac 1 {\log N}, \,\,v=-\frac 1 2 +\delta,$$ $$T_2 = N^{1 -2\alpha+4\delta}, \,\,T_1 = e^{c_1 \sqrt{\log N}}$$}

\scriptsize{for some special choice of $c$ and $c_1$ and a small $\delta$.}
\end{center}
\vspace{1cm}
\begin{center}
\normalsize
\textbf{Fig. 1} 
\end{center}

\newpage

In our estimates we will take the liberty of using the $I_j$ notation for both the integrals and the corresponding contours.
\\

\textbf{The bound on $I_1$.} Note that the contribution to the integral coming from the part of the contour with $\operatorname{Im}s \leq t_0$ for $t_0$ as in Lemma \ref{zero-free} is negligible. Ignoring this part of the contour and exploiting the lemma which states that $|\zeta^{\B}(s+1)| \lesssim \log |\operatorname{Im}s|$ on the rest of the $I_1$, we see that
 
 \begin{align*}
 |I_1| &=\frac {C^{\B}} {\pi}\left|\int_{I_1} \xi(s) \zeta(1-s) \zeta^{\B} (s+1) K^{\B} (s) N^{s+1} \frac {ds} {s(s+1)}\right| \\
 &\lesssim \int_{t_0}^{T_1} t^{1/2-\left(\alpha-1-\frac c {\log T_1}\right)} \log t \cdot N^{\alpha - \frac c {\log T_1}}\, \frac {dt} {t^2} \\
 &\lesssim N^{\alpha} e^{-c \frac {\log N} {\log T_1}} \int_3^{T_1} t^{-\alpha-1/2}\, \log t \,dt\\
 &\lesssim N^{\alpha} e^{-c \frac {\log N} {\log T_1}} \,T_1^{1/2-\alpha+\varepsilon}
 \end{align*}
fixing arbitrary small $\varepsilon>0$. Plugging in $T = e^{c_1 \sqrt{\log N}}$, we obtain
 \begin{align*}
 |I_1| \lesssim N^{\alpha} \,e^{ \sqrt{\log N} ((\frac 1 2 -\alpha+\varepsilon) c_1-\frac c {c_1})}
 \end{align*}
 which, for $c_1< \sqrt{\frac c {\frac 1 2 -\alpha +\varepsilon}}$, yields $|I_1| = o\left(N^{\alpha}\right)$.
 
  \textbf{The bound on $I_2$.} This bound follows easily from the behavior of $\xi(s)$ on this segment and our choice of $c$. Indeed,
 \begin{align*}
 |I_2| &=\frac {C^{\B}} {\pi} \left| \int_{\alpha_1+i T_1}^{c+i T_1} \xi(s) \zeta(1-s) \zeta^{\B} (s+1) K^{\B} (s) N^{s+1} \frac {ds} {s(s+1)}\right|  \\
 & \lesssim \int_{\alpha_1}^{c} N^{1+\sigma}\,T_1^{-\sigma -\frac 3 2}\, \log T_1\, d \sigma \\
 &\lesssim N^{1+c} T_1^{-\alpha_1-\frac 3 2} = N^{\alpha+\frac 1 {\log N}} T_1^{-\alpha+ \frac c {\log T}-\frac 1 2} \\
 & \lesssim N^{\alpha} T_1^{-\alpha-1/2} =o\left(N^{\alpha}\right).
 \end{align*}
 
 \textbf{The bound on $I_3$.} This estimate is rather similar to $I_2$. Due to fast growth of $T_2$,
 \begin{align*}
 |I_3| &=\frac {C^{\B}} {\pi} \left| \int_{c+i T_2}^{v+i T_2} \xi(s) \zeta(1-s) \zeta^{\B} (s+1) K^{\B} (s) N^{s+1} \frac {ds} {s(s+1)}\right|  \\
 & \lesssim \int_{c}^{v} N^{1+\sigma}\,T_2^{-\sigma -\frac 3 2}\, \log T_2\, d \sigma \\
 &\lesssim N^{1+v} T_2^{-c-\frac 3 2} \lesssim N^{1/2+\delta} T_2^{-\alpha-1/2} \\
 & \lesssim N^{1/2+\delta} T_2^{-1/2} =o\left(N^{\alpha}\right).
 \end{align*}
 \newpage
 
 \textbf{The bounds on $I_4$ and $I_5$.} These bounds follow from the 
 \begin{proposition}\label{prop}
Denote $\alpha':=\alpha-1$. For any $c=c(N)$ with $\alpha' < c <0$ and $$T=T(N) =~\omega\left( \max\left[\left(\frac {N^{c-\alpha'}} {c-\alpha'}\right)^{\frac 1 {\frac 3 2 +c}}, \left(\frac 1 {c-\alpha'}\right)^{\frac 1 {1 + \alpha'}}\right]\right),$$ define 
 $$\mathcal{I}=\mathcal{I}_{c,T} = \int_{c+iT}^{c+i\infty} N^{s+1} \xi(s) \zeta(1-s) G^{\B} (s) \frac {ds} {s(s+1)}.$$
 Then $|\mathcal{I}| = o\left(N^{\alpha}\right)$, as $N \to \infty$.
 \end{proposition}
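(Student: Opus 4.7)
The plan is to bound $|\mathcal{I}|$ by estimating the integrand pointwise on the vertical line $\operatorname{Re}s = c$ and integrating from height $T$ upward. I would first collect pointwise estimates for each factor on this line. By the asymptotic expansion (\ref{xi}), $|\xi(s)| \lesssim t^{1/2-c}$. Since $\operatorname{Re}(1-s) = 1-c > 1$, the Dirichlet series of $\zeta(1-s)$ converges absolutely, giving $|\zeta(1-s)| = O(1)$. The Euler product defining $K^{\B}(s)$ converges absolutely and uniformly on any compact subset of the half-plane $\operatorname{Re}(s+1) > \alpha$, which contains the contour, so $K^{\B}(s) = O(1)$. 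The most delicate factor is $\zeta^{\B}(s+1)$: since $\operatorname{Re}(s+1) = c+1 > \alpha$, Lemma \ref{zero-free} supplies $|\zeta^{\B}(s+1)| \lesssim \log t$ provided $t \geq t_0$ and $c - \alpha' \geq c_L/\log t$ (where $c_L$ is the constant from the lemma); the second hypothesis $T^{1+\alpha'} = \omega(1/(c-\alpha'))$ is precisely what forces the latter condition to hold uniformly for $t \geq T$, so that the $\log t$ bound applies throughout the contour.

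Assembling these estimates with $|s(s+1)|^{-1} \lesssim t^{-2}$, I obtain the pointwise bound $|\text{integrand}| \lesssim N^{c+1}\log t / t^{3/2+c}$ on the contour. When $c > -1/2$, direct integration from $T$ to $\infty$ yields
$$|\mathcal{I}| \lesssim \frac{N^{c+1}\,\log T}{(1/2+c)\,T^{1/2+c}},$$
and the first hypothesis $T^{3/2+c} = \omega\bigl(N^{c-\alpha'}/(c-\alpha')\bigr)$, rearranged as $T^{1/2+c} \gg N^{c-\alpha'}/(T\,(c-\alpha'))$, readily forces the right-hand side to be $o(N^\alpha) = o(N^{1+\alpha'})$; the constant $1/(1/2+c)$ and the factor $\log T$ are absorbed since $T \to \infty$.

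The main obstacle is the case $c \leq -1/2$, which is the regime encountered when applying the proposition to the contour piece $I_4$ for $\alpha \leq 1/2$: there the integral $\int_T^\infty t^{-3/2-c}\,dt$ diverges and pointwise bounds alone fail. I would address this by exploiting oscillation via integration by parts in $t$, using the combined phase of $N^{s+1}\xi(s)$. Writing this product as an amplitude of magnitude $\lesssim N^{c+1} t^{1/2-c}$ times $e^{i\Phi(t)}$ with $\Phi(t) = t\log(2\pi N/t) + t$ (so $\Phi'(t) = \log(2\pi N/t)$), a non-stationary-phase integration by parts converts a factor of $e^{i\Phi(t)}$ into $1/(i\Phi'(t))$ plus a boundary term, gaining an extra factor of $1/|\Phi'(t)|$ in the integrand. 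For $T$ sufficiently far from the stationary point $t_0 = 2\pi N$ (as holds in both applications of the proposition, with $T_1 \ll N$ for $I_4$ and $T_2$ in the other regime), $|\Phi'(t)|$ is bounded below by a quantity of size $\log(T/N)$ or $\log(N/T)$; iterating the integration by parts as many times as needed boosts the decay rate to render the remainder convergent and of order $N^{c+1}/T^{3/2+c}$ up to logarithms, which the first hypothesis again dominates. Combining both regimes gives $|\mathcal{I}| = o(N^\alpha)$ as claimed.
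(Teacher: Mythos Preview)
Your pointwise approach loses a full factor of $T$ and so cannot recover $o(N^{\alpha})$ from the stated hypotheses. Concretely, in your ``easy'' case $c>-\tfrac12$ you arrive at $|\mathcal I|\lesssim N^{c+1}\log T/T^{1/2+c}$, but the first hypothesis controls $T^{3/2+c}$, not $T^{1/2+c}$; your rearrangement $T^{1/2+c}\gg N^{c-\alpha'}/(T(c-\alpha'))$ only yields $|\mathcal I|\lesssim N^{\alpha}\,T(c-\alpha')\log T$, which diverges. Check it against the actual application to $I_5$ (where $c=-\tfrac12+\delta$, $T=T_2=N^{1-2\alpha+4\delta}$): your bound is $\asymp N^{1/2+\delta}/N^{\delta(1-2\alpha+4\delta)}$, which for small $\delta$ is of order $N^{1/2}$, far larger than $N^{\alpha}$ for any $\alpha\le\tfrac12$. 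So the gap is not confined to $c\le-\tfrac12$; it is present throughout.

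The integration-by-parts sketch does not rescue this, because the phase you isolate is only that of $N^{s+1}\xi(s)$. The remaining factor $\zeta(1-s)G^{\B}(s)=\sum_{n,m}g^{\B}(m)n^{c-1}m^{-c}(n/m)^{it}$ carries its own oscillation, so the true integrand is a superposition of phases $t\mapsto t\log(2\pi e Nn/(mt))$ with stationary points spread over all $t\approx 2\pi Nn/m$; there is no single non-stationary regime. The paper proceeds by making exactly this Dirichlet expansion first, reducing $\mathcal I$ to $N^{c+1}\sum_{n,m}\frac{g^{\B}(m)}{n^{1-c}m^{c}}\,\mathcal I_{n,m}$ with $\mathcal I_{n,m}=\int_T^{\infty}(2\pi eNn/(mt))^{it}t^{-3/2-c}\,dt$, and then bounding each $\mathcal I_{n,m}$ via its antiderivative (getting $T^{-3/2-c}$ when $T<Nn/m$ and $T^{-1-c}$ otherwise). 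Summing with Lemma~\ref{conv} produces exactly the extra power of $T$ your pointwise bound is missing and is where both hypotheses on $T$ are genuinely used. (Incidentally, the condition from Lemma~\ref{zero-free} reads $c-\alpha'>-c_L/\log t$, which is automatic since $c>\alpha'$; the second hypothesis is not needed there.)
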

 
One can easily verify that our pairs $(\alpha_2, T_1) =\left(\alpha-1+\frac 1 {\log N}, e^{c_1 \sqrt{\log N}} \right)$ and $ (v, T_2) = \left(-\frac 1 2 +\delta, N^{1 -2\alpha+4\delta}\right)$ satisfy the conditions of Proposition~\ref{prop}. Therefore, once we prove the Proposition, we are done.

Recall that $$\zeta^{\B}(s+1)K^{\B}(s) = G^{\B} (s) = \prod_b \left(1+\frac 1 {b^s(b-2)}\right) = \sum\limits_{m=1}^{\infty} \frac {g^{\B}(m)} {m^s}.$$ 
 
We first prove a simple auxiliary lemma on the rate of convergence for $G^{\B} (s)$ on the real line.

\begin{lemma}\label{conv}
There exists some universal constant $C$ (depending on $\B$ only) such that for any real $c>\alpha' = \alpha-1$ and any real $x>0$, $\sum_{m>x} \frac {g^{\B} (m)} {m^c} \leq C \frac{x^{\alpha' -c}} {c-\alpha'}$.
\end{lemma}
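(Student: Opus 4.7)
The plan is to reduce the lemma to a Tauberian-type tail estimate $R(y) := \sum_{m > y} g^{\B}(m) \lesssim y^{\alpha'}$ and then recover the weighted bound by a dyadic decomposition. In this approach, the factor $1/(c - \alpha')$ emerges naturally from the geometric series in the dyadic sum rather than being tracked directly through a contour integral.

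The first step --- the Tauberian bound --- uses the factorization $G^{\B}(s) = \zeta^{\B}(s+1) K^{\B}(s)$ together with Lemma~\ref{extension}, which shows $G^{\B}$ is meromorphic on $\operatorname{Re}(s) > \beta - 1$ with a single simple pole at $s = \alpha'$ of residue $R_G = A\alpha\, K^{\B}(\alpha')$. Fubini's theorem provides the Mellin identity
$$\int_1^{\infty} R(y)\, y^{-s-1}\, dy = \frac{G^{\B}(0) - G^{\B}(s)}{s}, \qquad \operatorname{Re}(s) > \max(0, \alpha'),$$
and Mellin inversion combined with shifting the contour leftward past $s = \alpha'$ to $\operatorname{Re}(s) = \beta - 1 + \varepsilon$ yields
$$R(y) = \frac{R_G}{|\alpha'|}\, y^{\alpha'} + O(y^{\beta - 1 + \varepsilon}) \qquad (y \geq 1).$$
The apparent singularity at $s = 0$ cancels since $G^{\B}(0) - G^{\B}(s) = O(s)$ near $0$, and Lemma~\ref{zero-free} supplies the polynomial growth bound on $|\zeta^{\B}(s+1)|$ needed for absolute convergence along the shifted line. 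For $0 < y < 1$, the trivial bound $R(y) \leq G^{\B}(0)$ combined with $y^{\alpha'} \geq 1$ extends the estimate.

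The second step is a dyadic decomposition: bounding each block $2^k x < m \leq 2^{k+1} x$ by $(2^k x)^{-c} R(2^k x)$ (up to a harmless constant factor, since $c > \alpha' > -1$) yields
$$\sum_{m > x} \frac{g^{\B}(m)}{m^c} \lesssim x^{\alpha' - c} \sum_{k \geq 0} 2^{k(\alpha' - c)} = \frac{x^{\alpha' - c}}{1 - 2^{\alpha' - c}} \lesssim \frac{x^{\alpha' - c}}{c - \alpha'}$$
for $c - \alpha'$ in any fixed bounded range. The complementary regime of large $c - \alpha'$ (where the geometric series is merely $O(1)$) is treated separately via the cruder estimate $\sum_{m > x} g^{\B}(m)/m^c \leq \sum_{m \in \mathcal{Z}^{\B},\, m > x} 1/m^c$ (using $g^{\B} \leq 1$ and $\mathcal{Q}^{\B} \subseteq \mathcal{Z}^{\B}$), which combined with Abel summation against $\mathcal{N}^{\B}(N) = AN^{\alpha} + O(N^{\beta})$ and the ensuing rapid decay in $c$ absorbs the missing factor.

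The main technical obstacle is the Tauberian step: justifying the contour shift requires Lemma~\ref{zero-free}'s polynomial growth estimate on $|\zeta^{\B}|$ to ensure both absolute convergence of the vertical integral along $\operatorname{Re}(s) = \beta - 1 + \varepsilon$ and the negligibility of the horizontal connecting segments. Once that is in hand, both the Tauberian bound and the final dyadic estimate are routine.
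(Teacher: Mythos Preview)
Your Tauberian step has a genuine gap. You invoke Lemma~\ref{zero-free} to justify shifting the Mellin contour for $H(s) = (G^{\B}(0) - G^{\B}(s))/s$ all the way to $\operatorname{Re} s = \beta - 1 + \varepsilon$, but that lemma only bounds $|\zeta^{\B}(s)|$ in the thin region $\sigma > \alpha - c/\log t$; it says nothing about growth of $\zeta^{\B}(s+1)$ on the line $\operatorname{Re}(s+1) = \beta + \varepsilon$. Neither Lemma~\ref{extension} nor Lemma~\ref{zero-free} as stated gives you control of $\zeta^{\B}$ deep inside the critical strip, so neither the absolute convergence of the shifted vertical integral nor the vanishing of the horizontal segments is justified. (Your ``separately'' treatment of large $c-\alpha'$ is also off: the bound $g^{\B}\le 1$ only yields $\sum_{m>x,\,m\in\mathcal Z^{\B}} m^{-c}\asymp x^{\alpha-c}/(c-\alpha)$, which exceeds the target $x^{\alpha'-c}/(c-\alpha')$ by a factor of $x$.)

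The paper avoids all of this with a one--line observation you never use: for $m\in\mathcal Q^{\B}$,
\[
g^{\B}(m)=\prod_{b\mid m}\frac{1}{b-2}=\frac 1 m\prod_{b\mid m}\frac{b}{b-2}\le \frac{1}{C^{\B}}\cdot\frac 1 m,
\]
so it suffices to bound $\sum_{m>x,\,m\in\mathcal Q^{\B}} m^{-(1+c)}\le \int_x^\infty u^{-(1+c)}\,d\mathcal N^{\B}(u)$, and integration by parts against $\mathcal N^{\B}(u)=Au^{\alpha}+O(u^{\beta})$ gives $\lesssim x^{\alpha'-c}/(c-\alpha')$ directly, with the $1/(c-\alpha')$ emerging from the antiderivative rather than from a geometric series. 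Note that this elementary argument already delivers your tail bound $R(y)\lesssim y^{\alpha'}$ (take $c=0$), so even if you kept the dyadic second step, the contour--integral machinery in the first step is unnecessary.
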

\begin{proof} Recall $C^{\B} = \prod_b \frac {b-2} b$ and note that, for any $m \in \mathcal{Q}^{\B}$, 
\begin{equation} \label{gbound}
g^{\B}(m) = \prod_{b|m} \frac 1 {b-2} = \frac 1 m \, \prod_{b|m} \frac b {b-2}  \leq\frac 1 {C^{\B}} \cdot \frac 1 m
\end{equation}
Therefore, it is sufficient to show $$\sum\limits_{\substack{m>x, \\ m \in \mathcal{Q}^{\B}}} \frac 1 {m^{1+c}} \leq C\frac{x^{\alpha' -c}} {c-\alpha'}$$ for some constant $C$.

This follows from the fact that $\sum_{\substack{m>x, \\ m \in \mathcal{Q}^{\B}}} \frac 1 {m^{1+c}} = \int_x^{\infty} u^{-(1+c)} d N^{\B} (u)$. Recall $N^{\B} (u) = A u^{\alpha} + O\left(u^{\beta}\right)$ and perform integration by parts to obtain the result.

\end{proof}
 
\begin{proof}[Proof of Proposition~\ref{prop}]
Taking into account (\ref{xi}), we see that 
$$\mathcal{I} \lesssim  N^{c+1} \sum\limits_{m=1}^{\infty} \sum\limits_{n=1}^{\infty} \frac {g^{\B}(m)} {n^{1-c} m^{c}} \left|\mathcal{I}_{n,m}\right|$$ for $$\mathcal{I}_{n,m} = \int_{T}^{\infty} \left(\frac {2\pi e N n} {m t} \right)^{it} t^{-\frac 3 2 -c} dt.$$ 

Define $$\mathcal{J}_{n,m} (t) = \int_{T}^t \left(\frac {2\pi e N n} {m t} \right)^{iu} du.$$ Then $|\mathcal{J}_{n,m} (t)| \lesssim 1$ for $t < \frac {Nn} {m}$ and $|\mathcal{J}_{n,m} (t)| \lesssim t^{\frac 1 2}$ for all $t$ (for a proof, see \cite{H}). Therefore, since $c>-1$,
\begin{align*}
\mathcal{I}_{n,m} &= \int_{T}^{\infty} t^{-\frac 3 2 -c} d \mathcal{J}_{n,m} (t)\\
&\lesssim \int_{T}^{\infty} |\mathcal{J}_{n,m} (t)| \,t^{-\frac 5 2 -c} dt.
\end{align*}

and

$$|\mathcal{I}_{n,m}| \lesssim
\begin{cases}
T^{-\frac 3 2 - c} + \left(\frac {Nn} {m}\right)^{-1-c}, & \text{if  } T < \frac {Nn} {m};\\
T^{-1-c}, & \text{if  } T \geq \frac {Nn} {m}.
\end{cases}$$

Hence

\begin{align*}
|\mathcal{I}|  \lesssim \,\,&N^{c+1} \,T^{-\frac 3 2 - c} \sum_{\substack{n,m:\\ T < \frac {Nn} {m}}} \frac {g^{\B}(m)} {n^{1-c} m^{c}} + \sum_{\substack{n,m:\\ T < \frac {Nn} {m}}} \frac {g^{\B}(m)} {n^2 m^{-1}} \\
+\,\,&N^{c+1} \,T^{-1-c} \sum_{\substack{n,m:\\ T \geq \frac {Nn} {m}}} \frac {g^{\B}(m)} {n^{1-c} m^{c}}\\
\\
 =: \,&\mathcal{I}_{1} + \mathcal{I}_{2} +\mathcal{I}_{3}.
\end{align*}

We start with $\mathcal{I}_{1}$.
\begin{align*}
\mathcal{I}_{1} &= N^{c+1} \,T^{-\frac 3 2 - c} \sum_{m=1}^{\infty} \frac {g^{\B}(m)} {m^{c}} \sum_{n>\operatorname{max} \left(1, \frac {mT} {N}\right)} \frac 1 {n^{1-c}} \\
&\lesssim N^{c+1} \,T^{-\frac 3 2 - c} \left\{\sum_{m<\frac N {T}} \frac {g^{\B}(m)} {m^{c}} + \sum_{m\geq \frac N {T}} \frac {g^{\B}(m)} {m^{c}} \left(\frac {m T} {N} \right)^{c}\right\} \\
&\lesssim N^{c+1} \,T^{-\frac 3 2 - c} \left\{\sum_{m=1}^{\infty} \frac {g^{\B}(m)} {m^{c}} + \left(\frac {T} N\right)^{c} \sum_{m\geq \frac N {T}} g^{\B}(m)\right\}.
\end{align*}
By Lemma \ref{conv}, we obtain
\begin{align*}
\mathcal{I}_1& \lesssim \frac 1 {c-\alpha'} N^{c+1} \,T^{-\frac 3 2 - c} \left\{1 + \left(\frac {T} N\right)^{c} \left(\frac N T \right)^{\alpha'}\right\} \\
&\lesssim \frac 1 {c-\alpha'} \left\{N^{c+1} \,T^{-\frac 3 2 - c} + N^{\alpha} T^{-\frac 3 2 -\alpha'}\right\}.
\end{align*} 
Since we are given $T=\omega\left( \max\left[\left(\frac {N^{c-\alpha'}} {c-\alpha'}\right)^{\frac 1 {\frac 3 2 +c}}, \left(\frac 1 {c-\alpha'}\right)^{\frac 1 {1 + \alpha'}}\right]\right)$ and, for the exponents, $\frac 3 2 +\alpha' > 1+\alpha'$, we see that $\mathcal{I}_1 = o(N^{\alpha})$.

Next, consider
\begin{align*}
\mathcal{I}_{2}&=\sum_{\substack{n,m:\\  m<\frac {Nn} {T}}} \frac {m g^{\B}(m)} {n^2} \lesssim \sum_{n} \frac{\mathcal{N}^{\B} \left(\frac {Nn} {T}\right)} {n^2} \lesssim \left(\frac {N} {T}\right)^{\alpha} \sum_n n^{\alpha-2} =o(N^{\alpha}).
\end{align*}

As for $\mathcal{I}_{3}$, again, by Lemma \ref{conv}, and the fact that $T^{\alpha}=T^{1+\alpha'}=\omega\left(\frac 1 {c(N)-\alpha'}\right)$,
\begin{align*}
\mathcal{I}_{3} &= N^{c+1} \,T^{-1-c} \sum_n \frac 1{n^{1-c}} \sum_{m \geq \frac {Nn} {T}} \frac {g^{\B}(m)} { m^{c}} \\
&\lesssim N^{c+1} \,T^{-1-c} \sum_n \frac 1{n^{1-c}} \cdot \frac 1 {c-\alpha'} \left(\frac {Nn} {T}\right)^{\alpha'-c} \\
&= N^{\alpha} \frac {T^{-\alpha}} {c-\alpha'} \sum_n n^{\alpha-2} = o(N^{\alpha}).
\end{align*}
Gathering all the estimates, we see $\mathcal{I} = o(N^{\alpha})$.
\end{proof}

\section{Proof of Theorem \ref{okvar}}\label{th2proof}
\begin{proof}
It can be seen from the proof of Theorem \ref{bvar} that, in order to estimate the variance, it is sufficient to use the formulae for the second correlations. Indeed, in complete analogy to (\ref{varvar}), it is easy to show that 

\begin{align}\label{varok2}
\Var_{\ok}^{(k)} (N) &=  \sum_{0\leq ||a_1||, ||a_2|| \leq N} c_{\ok, 2}^{(k)} (a_2-a_1) - \left(\rho_{\ok}^{(k)}\,\# B_N \right)^2\\
&=: M_{2,\ok}^{(k)} - \left(\rho_{\ok}^{(k)}\,\# B_N\right)^2.
\end{align}
Here, we will need the statement proven by Cellarosi and Vinogradov in \cite{CV}:
\begin{lemma}[Correlation functions of $k$--free integers in a number field] For every $r\geq 1$ and every $a_1, \cdots a_r \in \ok$,
\begin{equation}
c^{(k)}_{\ok, r+1} (a_1, \ldots a_r) = \prod_{\p} \left(1-\frac{D(\p^k |0, a_1, \ldots a_r) } {N(\p^k)} \right).
\end{equation}
where $$D(\p^k |0, a_1, \ldots a_r)=\#\{b \,\mod \,\p^k\, |\, b \equiv a_i\,\mod\, \p^k \text{ for some } i=0\ldots r, \, a_0 =0\}.$$
\end{lemma}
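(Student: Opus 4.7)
The plan is to follow the classical Mirsky--Mertens argument adapted to the number-field setting, namely an inclusion--exclusion via the Möbius function on ideals of $\ok$, followed by a lattice-point count in $B_X$ and a truncation of the resulting sum.

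First I would expand each factor $\mu^{(k)}(n+a_i)$ using the identity
\begin{equation*}
\mu^{(k)}(a)=\sum_{\mathfrak{d}\,:\,\mathfrak{d}^{k}\mid(a)}\mu_{K}(\mathfrak{d}),
\end{equation*}
where $\mu_{K}$ is the Möbius function on integral ideals (nonzero only on squarefree $\mathfrak{d}$). Multiplying and interchanging the order of summation gives, for $a_0:=0$,
\begin{equation*}
\sum_{n\in B_{X}}\prod_{i=0}^{r}\mu^{(k)}(n+a_i)
= \sum_{\mathfrak{d}_{0},\ldots,\mathfrak{d}_{r}}\Bigl(\prod_{i}\mu_{K}(\mathfrak{d}_{i})\Bigr)\,\#\bigl\{n\in B_{X}\,:\,\mathfrak{d}_{i}^{k}\mid(n+a_{i})\text{ for all }i\bigr\}.
\end{equation*}
For a fixed tuple of squarefree ideals, the inner set is either empty or a coset of the sublattice $\bigcap_{i}\mathfrak{d}_{i}^{k}$ in $\ok$, whose covolume (in the fixed identification with $\Z^d\subset\R^d$) is $N(\operatorname{lcm}_{i}\mathfrak{d}_{i}^{k})$. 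A standard lattice-point count for the convex body $B_{X}$ gives a main term $\#B_{X}/N(\operatorname{lcm}_{i}\mathfrak{d}_{i}^{k})$ plus a boundary error controlled by $X^{d-1}$ times a power of the diameter of $\operatorname{lcm}_{i}\mathfrak{d}_{i}^{k}$.

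Next I would truncate to $\max_{i}N(\mathfrak{d}_{i})\le X^{\eta}$ for a small $\eta>0$. The tail $\max_{i}N(\mathfrak{d}_{i})>X^{\eta}$ contributes at most
\begin{equation*}
\sum_{i}\sum_{N(\mathfrak{d})>X^{\eta}}\#\{n\in B_{X}:\mathfrak{d}^{k}\mid(n+a_{i})\}\ll \#B_{X}\sum_{N(\mathfrak{d})>X^{\eta}}\frac{1}{N(\mathfrak{d})^{k}}=o(\#B_{X}),
\end{equation*}
using $k\ge 2$ and the convergence of $\sum_{\p}N(\p)^{-k}$ (equivalently, the finiteness of $\zeta_{K}(k)$). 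The boundary errors from the truncated tuples combine to $O(X^{d-1+\varepsilon})$, which is also $o(\#B_{X})$ as $\#B_{X}\asymp X^{d}$. What remains is the main term
\begin{equation*}
\sum_{\mathfrak{d}_{0},\ldots,\mathfrak{d}_{r}\text{ squarefree}}\frac{\prod_{i}\mu_{K}(\mathfrak{d}_{i})\,\mathbf{1}_{\text{compatible}}}{N(\operatorname{lcm}_{i}\mathfrak{d}_{i}^{k})},
\end{equation*}
where ``compatible'' means the system $n\equiv -a_{i}\pmod{\mathfrak{d}_{i}^{k}}$ is solvable.

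Finally I would factor this sum as an Euler product over primes $\p$. Specifying, for each $\p$, the subset $I_{\p}\subset\{0,\ldots,r\}$ of indices with $\p\mid\mathfrak{d}_{i}$ decomposes the sum; compatibility at $\p$ requires that all $a_{i}$ with $i\in I_{\p}$ lie in a single residue class modulo $\p^{k}$. Grouping by the $D(\p^{k}\mid 0,a_{1},\ldots,a_{r})$ distinct residue classes $C_{1},\ldots,C_{m}$ of $\{0,a_{1},\ldots,a_{r}\}$ modulo $\p^{k}$, the local factor becomes
\begin{equation*}
1+\sum_{j=1}^{m}\sum_{\varnothing\ne I\subset C_{j}}\frac{(-1)^{|I|}}{N(\p^{k})}=1+\frac{1}{N(\p^{k})}\sum_{j=1}^{m}(-1)=1-\frac{D(\p^{k}\mid 0,a_{1},\ldots,a_{r})}{N(\p^{k})},
\end{equation*}
which is precisely the asserted factor.

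The main obstacle is the truncation/error-term step: one must verify that, uniformly in the tuple $(\mathfrak{d}_{0},\ldots,\mathfrak{d}_{r})$, the lattice-point count for $B_{X}$ in a coset of $\bigcap_{i}\mathfrak{d}_{i}^{k}$ is $\#B_{X}/N(\operatorname{lcm}_{i}\mathfrak{d}_{i}^{k})$ up to an admissible error, and that the tail over large-norm squarefree ideals is genuinely negligible. This uses the hypothesis $k\ge 2$ in a crucial way, since otherwise the sum $\sum N(\p)^{-k}$ would diverge and the sieve would fail.
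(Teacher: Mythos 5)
The paper does not actually prove this lemma: it is quoted from Cellarosi--Vinogradov \cite{CV}, and your Mirsky-style sieve --- expanding each $\mu^{(k)}(n+a_i)$ by M\"obius inversion over ideals, counting points of $B_X$ in a coset of $\bigcap_i \mathfrak{d}_i^k$ via a lattice-point estimate of the type in Lemma \ref{ballcount}, truncating, and factoring the surviving main term into an Euler product whose local factor is computed from $\sum_{\emptyset\neq I\subseteq C_j}(-1)^{|I|}=-1$ --- is exactly the standard route taken there, and your local-factor computation yielding $1-D(\p^k\,|\,0,a_1,\ldots,a_r)/N(\p^k)$ is correct. The one step that is not yet a proof is the one you flag yourself: when you bound the tail $\max_i N(\mathfrak{d}_i)>X^{\eta}$ by a single sum over one ideal $\mathfrak{d}$, you silently discard the multiplicity coming from the remaining indices $\mathfrak{d}_j$ (for a fixed $n$ there may be many admissible tuples), and moreover $\#\{n\in B_X:\mathfrak{d}^k\mid(n+a_i)\}$ is not $\ll \#B_X/N(\mathfrak{d})^k$ once $N(\mathfrak{d})^k\gg \#B_X$; both defects are repaired by a divisor-function bound $O(X^{\varepsilon})$ in $\ok$ and by adding the count of ideals with $N(\mathfrak{d})^k\ll X^d$, using $k\geq 2$, so the argument survives but this step must be written out.
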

Plugging this formula into (\ref{varok2}) for $r=1$, we obtain:
\begin{align*}
M_{2,\ok}^{(k)} (N) &=  \sum_{0\leq ||a_1||, ||a_2|| \leq N} \prod_{\p} \left(1-\frac{D(\p^k |0, a_2-a_1) } {N(\p^k)} \right) \\
&= \sum_{0\leq ||a_1||, ||a_2|| \leq N} \prod_{\substack{\p,\\ a_2 \equiv a_1 \operatorname{mod} \p^k}} \left(1-\frac 1 {N(\p^k)}\right) \prod_{\substack{\p,\\ a_2 \not\equiv a_1 \operatorname{mod} \p^k}} \left(1-\frac 2 {N(\p^k)}\right) \\
& = C_{\ok}^{(k)} \sum_{0\leq ||a_1||, ||a_2|| \leq N} \prod_{\substack{\p,\\ a_2 \equiv a_1 \operatorname{mod} \p^k}} \left(1+\frac 1 {N(\p^k)-2}\right)
\end{align*}
where $C_{\ok}^{(k)} = \prod_{\p} \left(1-\frac 2 {N(\p^k)}\right)$.

Let $\mathcal{Q}_{\ok}^{(k)}$ be the set of ideals of the form $\mathfrak{d} = \p^k_1 \ldots \p^k_s$ for some different prime $\p_i, i=1...s$. For such a $\mathfrak{d} \in \mathcal{Q}_{\ok}^{(k)} $, we define $$g_{\ok}^{(k)} (\mathfrak{d}) = \prod_{\p_i, i =1 \ldots s} \frac 1 {N(\p_i^k) -2}.$$ Then, clearly,
\begin{align*}
M_{2,\ok}^{(k)} (N)&= C_{\ok}^{(k)} \sum_{\mathfrak{d} \in \mathcal{Q}_{\ok}^{(k)}} g_{\ok}^{(k)} (\mathfrak{d})  \sum_{\substack{0\leq ||a_1||, ||a_2|| \leq N, \\ a_2-a_1 \in \mathfrak{d}}} 1.
\end{align*}

Here we need another standard fact which we give without a proof (it follows from the lattice description of ideals of $\ok$, see \cite{CV}, \cite{N}):
\begin{lemma}[Counting the elements of an ideal in an $L^1$--ball]\label{ballcount}
Let $a\in \ok$, $\mathfrak{d}$ be an ideal in $\ok$ and
$$T(x) = \# \{n \in \ok: ||n||\leq x\text{ and }n + a \equiv 0 \, \operatorname{mod} \mathfrak{d}\}.$$
Then
$$\left|T(x)-\frac {\# B_x} {N(\mathfrak{d})}\right| = O\left(\frac {\operatorname{diam} \mathfrak{d}} {N(\mathfrak{d})}\, x^{d-1}\right).$$
\end{lemma}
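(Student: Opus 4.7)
The plan is to recognize $T(x)$ as the count of a translated sublattice of $\ok$ inside an $L^1$--ball and control the error by the standard boundary--layer argument from the geometry of numbers.

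First, using the identification $\imath:\mathbb{Z}^d \to \ok$, the ideal $\mathfrak{d}$ becomes a sublattice of $\mathbb{Z}^d$ of index $N(\mathfrak{d})$, so its fundamental parallelepiped has Lebesgue volume $N(\mathfrak{d})$. The congruence $n+a \equiv 0 \pmod{\mathfrak{d}}$ rewrites as $n \in -a + \mathfrak{d}$, a coset of this sublattice, so $T(x)$ is the number of coset points inside $B_x$. I would choose generators $v_1,\ldots,v_d$ of $\mathfrak{d}$ achieving the minimum in the paper's definition of $\operatorname{diam}\mathfrak{d}$ and build the fundamental parallelepiped $F = \{\sum_i t_i v_i : 0 \leq t_i < 1\}$. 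Each vertex of $F$ is of the form $\sum_i \varepsilon_i v_i$ with $\varepsilon_i \in \{0,1\}$ and hence has $||\cdot||$--norm at most $\operatorname{diam}\mathfrak{d}$, so by the triangle inequality the $||\cdot||$--diameter of $F$ itself is $O(\operatorname{diam}\mathfrak{d})$.

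Next, let $A^- = \bigcup_{n+F \subset B_x}(n+F)$ and $A^+ = \bigcup_{(n+F)\cap B_x \neq \emptyset}(n+F)$, both unions ranging over $n \in -a + \mathfrak{d}$. Since $0 \in F$, the tiles $\{n+F\}_n$ partition $\ok \otimes_\Z \R$ and each coset point $n$ satisfies $n+F \subset B_x \Rightarrow n \in B_x \Rightarrow (n+F) \cap B_x \neq \emptyset$. Combined with $A^- \subset B_x \subset A^+$, this yields
$$\left|T(x) - \frac{\operatorname{vol}(B_x)}{N(\mathfrak{d})}\right| \,\leq\, \frac{\operatorname{vol}(A^+ \setminus A^-)}{N(\mathfrak{d})}.$$
The set $A^+ \setminus A^-$ lies in the $(\operatorname{diam}\mathfrak{d})$--neighbourhood of $\partial B_x$. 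Because $B_x$ is an $L^1$--ball in $\R^d$ (a cross--polytope), its boundary has $(d-1)$--dimensional measure $O(x^{d-1})$, and the volume of its $r$--neighbourhood is $O(r \cdot x^{d-1})$ for $r \leq x$, giving $\operatorname{vol}(A^+ \setminus A^-) = O(\operatorname{diam}\mathfrak{d} \cdot x^{d-1})$.

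Finally, the same boundary--layer estimate applied to the trivial lattice $\ok$ (which admits a fundamental domain of $O(1)$ $||\cdot||$--diameter) gives $|\# B_x - \operatorname{vol}(B_x)| = O(x^{d-1})$; dividing by $N(\mathfrak{d})$ this is absorbed into the main error since $\operatorname{diam}\mathfrak{d} \geq 1$ under the normalization in which lattice elements have norm $\geq 1$. Combining the two estimates proves the lemma. The only genuine subtlety is verifying that the paper's definition of $\operatorname{diam}\mathfrak{d}$ (the best-choice maximum $||\cdot||$--norm over vertices of a generating parallelepiped) really controls the $||\cdot||$--diameter of $F$ up to a constant; this follows from the triangle inequality on vertex--to--vertex differences. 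Everything else is standard.
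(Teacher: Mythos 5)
The paper does not actually prove this lemma: it is introduced as ``another standard fact which we give without a proof,'' with pointers to \cite{CV} and \cite{N}. So there is no internal argument to compare against, and what you have written is precisely the standard fundamental--domain, boundary--layer proof that those citations allude to. The key steps are all sound: $N(\mathfrak{d})$ equals the index $[\ok:\mathfrak{d}]$, hence the covolume of the corresponding sublattice of $\Z^d$; every point of the cell $F$ is a convex combination of the vertices $\sum_i \varepsilon_i v_i$, so $F$ has $||\cdot||$--diameter at most $2\operatorname{diam}\mathfrak{d}$ (convexity of the norm, not just vertex--to--vertex triangle inequality, but this is immediate); the sandwich $A^-\subset B_x\subset A^+$ gives $\left|T(x)-\operatorname{vol}(B_x)/N(\mathfrak{d})\right|\leq \operatorname{vol}(A^+\setminus A^-)/N(\mathfrak{d})$; and the comparison $|\# B_x-\operatorname{vol}(B_x)|=O(x^{d-1})$ is absorbed because every nonzero generator has $L^1$--norm at least $1$, so $\operatorname{diam}\mathfrak{d}\geq 1$.

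The one point worth flagging is the restriction you yourself write down: the bound $\operatorname{vol}(\text{$r$--neighbourhood of }\partial B_x)=O(rx^{d-1})$ requires $r\lesssim x$, i.e.\ $\operatorname{diam}\mathfrak{d}=O(x)$. In the complementary regime the inequality as stated in the lemma can actually fail with a uniform constant: take $a=0$, so $T(x)\geq 1$ always, while for fixed $x$ and $N(\mathfrak{d})\to\infty$ both $\# B_x/N(\mathfrak{d})$ and $\operatorname{diam}\mathfrak{d}\cdot x^{d-1}/N(\mathfrak{d})$ tend to $0$ (e.g.\ $\mathfrak{d}=(q)$ in $\Z[i]$ with $q$ large). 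The correct uniform statement carries an extra additive $O(1)$, which your $A^+$ estimate does deliver for ideals of $\ok$ once one adds the standard fact that $\operatorname{diam}\mathfrak{d}\asymp_K N(\mathfrak{d})^{1/d}$. This is a defect of the lemma's formulation rather than of your argument, and it is harmless where the lemma is used (the extra $O(1)$ per ideal contributes $O(N^{d})\leq O(N^{2d-1})$ against the summable weights $g_{\ok}^{(k)}(\mathfrak{d})$), but a complete write--up should either restrict to $\operatorname{diam}\mathfrak{d}=O(x)$ or include the additive constant.
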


From this Lemma we deduce that
\begin{align*}
M_{2,\ok}^{(k)} (N) &=  C_{\ok}^{(k)} \sum_{\mathfrak{d} \in \mathcal{Q}_{\ok}^{(k)}} g_{\ok}^{(k)} (\mathfrak{d}) \left[\frac {\left(\# B_N\right)^2} {N(\mathfrak{d})} + O \left( \frac {\operatorname{diam} \mathfrak{d}} {N(\mathfrak{d})}\, N^{2d-1} \right)\right].
\end{align*}

Since the error term can be bounded by $$\lesssim \sum_{\mathfrak{d} \in \mathcal{Q}_{\ok}^{(k)}} \frac{ g_{\ok}^{(k)} \operatorname{diam} \mathfrak{d}} {N(\mathfrak{d})}\, N^{2d-1} = O_{K,k} \left(N^{2d-1}\right),$$ we are only left with showing that $$C_{\ok}^{(k)} \sum_{\mathfrak{d} \in \mathcal{Q}_{\ok}^{(k)}} \frac {g_{\ok}^{(k)} (\mathfrak{d})} {N(\mathfrak{d})} = \left(\rho_{\ok}^{(k)}\right)^2.$$ Indeed,
\begin{align*}
C_{\ok}^{(k)} \sum_{\mathfrak{d} \in \mathcal{Q}_{\ok}^{(k)}} \frac {g_{\ok}^{(k)} (\mathfrak{d})} {N(\mathfrak{d})} & = \prod_{\p} \frac {N(\p^k)-2} {N(\p^k)} \prod_{\p} \left (1 +\frac 1 {N(\p^k) (N(\p^k)-2)} \right) \\
&= \prod_{\p} \frac {N(\p^k)-2} {N(\p^k)} \prod_{\p} \frac {\left(N(\p^k) - 1\right)^2} {N(\p^k) (N(\p^k)-2)} \\
&= \prod_{\p} \frac {\left(N(\p^k) - 1\right)^2} {(N\left(\p^k)\right)^2} =  \left(\rho_{\ok}^{(k)}\right)^2.
\end{align*}
Therefore,
\begin{equation*}
\Var_{\ok}^{(k)} (N) = M_{2,\ok}^{(k)} - \left(\rho_{\ok}^{(k)}\,\# B_N\right)^2 = O_{K, k}\left(N^{2d-1}\right).
\end{equation*}
The theorem is proven.
\end{proof}

\end{document}